\documentclass[11pt]{amsart}
\usepackage{url, calc}
\usepackage{amsmath,amsxtra,amssymb,latexsym,epsfig,amscd,amsthm,fancybox,epsfig}
\usepackage[mathscr]{eucal}
\usepackage{graphicx}
\usepackage{multicol,xcolor}
\usepackage{epsfig} 
\usepackage{epstopdf}
\usepackage{cases}
\usepackage{subfig}
\usepackage{color}
\usepackage{hyperref}
\newenvironment{eqcite}{
	\begin{equation}
		\begin{aligned}
		}{
		\end{aligned}
	\end{equation}
}
\def\beq{\begin{eqcite}}
	
	\def\eeq{\end{eqcite}}

\newtheorem{thm}{Theorem}[section]
\newtheorem {asp}{Assumption}[section]
\newtheorem{lm}{Lemma}[section]

\newtheorem{pron}{Proposition}[section]
\newtheorem{deff}{Definition}[section]

\newtheorem{lem}{Lemma}[section]
\newtheorem{prop}{Proposition}[section]
\theoremstyle{definition}
\newtheorem{defn}[thm]{Definition}
\newtheorem{hyp}[thm]{Hypotheses}
\theoremstyle{remark}
\newtheorem{rem}{Remark}

\numberwithin{equation}{section}

\DeclareMathOperator{\PI}{P_{inv}}
\DeclareMathOperator{\PE}{P_{erg}}

\DeclareMathOperator{\Conv}{Conv}
\newcommand{\eps}{\varepsilon}

\newcommand{\A}{\mathcal{A}}

\newcommand{\C}{\mathcal{C}}
\newcommand{\CE}{\mathcal{E}}
\newcommand{\CS}{\mathcal{S}}

\newcommand{\M}{\mathcal{M}}
\newcommand{\F}{\mathcal{F}}

\newcommand{\E}{\mathbb{E}}
\newcommand{\BE}{\mathbf{E}}
\newcommand{\BB}{\mathbf{B}}

\newcommand{\BS}{\mathbb{S}}

\newcommand{\BX}{\mathbf{X}}
\newcommand{\bx}{\mathbf{x}}

\newcommand{\BY}{\mathbf{Y}}

\newcommand{\by}{\mathbf{y}}
\newcommand{\BZ}{\mathbf{Z}}

\newcommand{\bz}{\mathbf{z}}

\newcommand{\N}{\mathbb{N}}

\newcommand{\PP}{\mathbb{P}}

\newcommand{\K}{\mathcal{K}}

\newcommand{\R}{\mathbb{R}}

\newcommand{\op}{\mathcal{L}}
\newcommand{\U}{\mathcal{U}}

\numberwithin{equation}{section}

\newcommand{\p}{\mathfrak{p}}
\newcommand{\m}{\mathfrak{m}}

\newcommand{\Lom}{\mathcal{L}}

\newcommand{\bed}{\begin{equation}}
\newcommand{\eed}{\end{equation}}
\newcommand{\bea}{\bed\begin{array}{rl}}
\newcommand{\eea}{\end{array}\eed}
\newcommand{\ad}{&\!\!\!\disp}
\newcommand{\aad}{&\disp}
\newcommand{\barray}{\begin{array}{ll}}
\newcommand{\earray}{\end{array}}
\newcommand{\diag}{{\rm diag}}
\def\disp{\displaystyle}

\newcommand{\1}{\boldsymbol{1}}

\newcommand{\bdelta}{\boldsymbol{\delta}}

\newcommand{\dist}{\mathrm{dist}}

\def\bar{\overline}
\def\hat{\widehat}
\def\a.s{\text{\;a.s.\;}}

\def\al{\alpha}
\newcommand{\wdt}{\widetilde}
\title[Stochastic Extinction]{Stochastic Extinction with Relaxed Boundedness Conditions}

\author[N. Nguyen]{Nhu N. Nguyen }

\address{Department of Mathematics, University of Connecticut, Storrs, CT 06269}
\email{nguyen.nhu@uconn.edu}

\author[D. Nguyen]{Dang H. Nguyen }

\address{Department of Mathematics, University of Alabama, Tuscaloosa, AL
35487}
\email{dangnh.maths@gmail.com}

\keywords{Kolmogorov system; extinction; Lotka-Volterra; Lyapunov exponent; stochastic environment; predator-prey; population dynamics}
\subjclass[2010]{92D25, 37H15, 60H10, 60J05, 60J99}

\begin{document}
\begin{abstract}We study stochastic extinction for a class of Markov processes motivated by models in ecology and epidemiology. Extinction is often characterized by a boundedness condition and a condition on boundary Lyapunov exponents (invasion rates). While the latter is typically sharp, the former is often restrictive and can be improved. Building on the ideas initiated in \cite{benaim2018stochastic}, we develop a streamlined approach that relaxes this boundedness condition and yields concise and accessible criteria for extinction.
	
	In particular, we establish extinction criteria in two settings: with and without a linearly bounded quadratic variation condition. In the first case, our result is comparable to, and slightly improves upon, the main results in \cite{foldes2024stochastic}. In the second case, where the quadratic variation is not linearly bounded, we obtain new extinction results that fall outside the scope of existing frameworks.
	
	Several examples are provided to illustrate the applicability of our results and to highlight situations where previous conditions are not practically verifiable.
\end{abstract}
\maketitle
\tableofcontents
\newpage
\section{Introduction}
Stochastic persistence and extinction are central themes in the study of dynamical systems subject to random fluctuations, with important applications in ecology, epidemiology, and population dynamics. Understanding whether a population persists or goes extinct in the presence of environmental noise is a fundamental question, and significant progress has been made over the past decades in developing mathematical frameworks to address it.

A pioneering contribution in this direction is \cite{benaim2018stochastic}, which laid the foundation for the study of stochastic persistence. In that work, the author introduced a powerful approach based on Lyapunov-type functions and invariant measures to characterize persistence in stochastic systems. The results provide deep insight into how random perturbations influence long-term behavior and have had a lasting impact on the field, inspiring a wide range of subsequent developments, see e.g. \cite{benaim2019random, benaim2019persistence,guillin2019persistence,hening2018coexistence, hening2021general,hening2022classification}. Although \cite{benaim2018stochastic} was intended as the first part of a broader program, with a second part devoted to stochastic extinction, the latter has not been published. Nevertheless, the ideas underlying the treatment of extinction are largely implicit in the framework and have guided several later works addressing extinction or the stability of invariant manifolds.

Building on these ideas, a number of authors have developed criteria for extinction in various classes of stochastic models, often by adapting Lyapunov methods or analyzing the behavior of the process near boundary sets, see the references mentioned above as well as \cite{hening2023stochastic, nguyen2024stability, strickler2021randomly,nguyen2020general,nguyen2020long, nguyen2021stochastic2, tuong2024stochastic}. Typically, extinction is established via two types of conditions: a boundedness condition to manage the behavior near "infinity", and a condition on Lyapunov exponents with respect to invariant probability measures on the boundary (often referred to as invasion rates). The latter condition is often sharp, exhibiting an ``on--off'' threshold behavior, whereas the former remains restrictive in many existing results and leaves room for improvement.

Addressing this issue is also a key motivation of \cite{foldes2024stochastic}, which proposes a general and systematic framework for analyzing extinction phenomena in Markov processes. Their approach, based on a detailed analysis of sample paths and stopping times over the entire state space, provides general conditions ensuring extinction and applies to a broad class of models.

While the framework of \cite{foldes2024stochastic} is mathematically comprehensive  and quite general, it is also technically involved and rather length, what reduces the readiability of the paper. The reliance on global conditions and the need to control the process over the entire state space can make the resulting criteria difficult to verify in concrete applications. Moreover, for certain models of practical interest, these global assumptions may fail to hold, limiting the applicability of the theory.	

The goal of the present paper is to revisit the approach initiated in \cite{benaim2018stochastic} and demonstrate that, with slight but delicate modifications, it can be effectively used to derive extinction results in a simpler and more accessible manner. Our strategy emphasizes the local behavior of the process near a relevant subset (typically an invariant manifold or boundary set) rather than requiring detailed control over the entire state space. This localization allows us to relax some of the global assumptions commonly imposed in the literature, while still obtaining rigorous and meaningful conclusions.

In addition, we establish new results that apply in situations where existing conditions, including those in \cite{foldes2024stochastic}, are not satisfied. In particular, we show that certain key assumptions—such as a linearly bounded quadratic variation—can be replaced by other  conditions without sacrificing the validity of the conclusions. 

Overall, the contributions of this paper are threefold. First, we provide a streamlined and conceptually transparent approach to stochastic extinction, closely aligned with the original ideas of \cite{benaim2018stochastic}. Second, we extend the scope of existing results by identifying conditions under which extinction can be established even when standard assumptions fail. Third, we generalize these results to settings involving multiple Lyapunov functions, which offers a more flexible and user-friendly framework, particularly for high-dimensional models. We also present several examples to demonstrate the applicability and effectiveness of our approach.
%
%
%
%
\section{Main Results}
\subsection{Notation and standing hypotheses}
Let $(M, d)$ be a locally compact Polish (complete and separable) metric space, and
consider a continuous time
Markov process $X(t)$ living in $\M$.
Let
$M_0\subset M$ be a closed invariant subset, called the extinction set. That is
$$X(0) \in M_0 \Leftrightarrow X(t) \in \M_0,\text{ for all }t \geq 0.$$

%

Throughout the paper, we will use the setting and terminology introduced in \cite{benaim2018stochastic}. In particular,  we work with a probability space $(\Omega, \mathcal{F}, \mathbb{P})$, 
a complete right-continuous filtration $(\mathcal{F}_t)_{t \ge 0}$, and a family of càdlàg Markov processes 
$\{(X^x(t))_{t \ge 0}, \; x \in \M\}$ on $(\Omega, \mathcal{F}, (\mathcal{F}_t)_{t \ge 0}, \mathbb{P})$ satisfying

\begin{itemize}
	\item[(i)] For all $x \in \M$, $X(t)$ is an $\M$-valued $\mathcal{F}_t$-measurable random variable, 
	$X_0^x = x$ $\mathbb{P}$-a.s., and $t \mapsto X_t^x$ is càdlàg (i.e., right-continuous with left-hand limits);
	
	\item[(ii)] For each $f \in \mathcal{M}_b(\M)$, the mapping
	\[
	(t,x) \in \mathbb{R}_+ \times \M \mapsto P_t f(x) = \mathbb{E}\big(f(X_t^x)\big)=:\E_x f(X_t)
	\]
	is measurable, and
	\[
	\mathbb{E}\big(f(X_{t+s}^x)\mid \mathcal{F}_t\big) = (P_s f)(X_t^x), 
	\quad \mathbb{P}\text{-a.s.}
	\]
\end{itemize}
We sometimes let $\mathbb{P}_x$ denote the law of $(X^x(t))$ on the Skorokhod space $D(\mathbb{R}_+, \M)$. 
That is,
\[
\mathbb{P}_x(\cdot) = \mathbb{P}\big(\omega \in \Omega : (X_t^x(\omega))_{t \ge 0} \in \cdot \big).
\]
For an invariant subset $A$ of $\M$, we denote by $\PI(A)$ and $\PE(A)$ the families of invariant probability measures and ergodic probability measures respectively.
\begin{deff}\label{def-proper}
	$W:M\to\R$ called a $\M$-proper function if there is a sequence of compact sets $(V_n)_{n\geq 1}$ such that
	$\cup_nV_n = \M$ such that
	$$
	\lim_{n\to\infty}\inf_{x\in \M\setminus V_n}W(x)=\infty.
	$$
\end{deff}

\begin{deff}
	A function $f$ is said to be uniformly integrable w.r.t. a family of probability measures $\C$ if for any $\eps$, there is a $K$ such that
	\begin{equation}\label{eq-10}
		\mu (|f|\1_{|\{f|\geq K\}})\leq \eps \text{ for all }\mu\in \C.
	\end{equation}
	
\end{deff}

We will assume that the following hypotheses holds throughout the section.
\begin{hyp}\label{hyp1}
\begin{enumerate}
	\item \textit{(Invariant decomposition)}
	Let $\M_+\subset\M$ be an invariant set under $(P_t)_{t \ge 0}$.
	There exists a closed set $M_0 \subset \M\setminus\M_+$ called the extinction set of $(P_t)_{t \ge 0}$ which is also invariant under $(P_t)_{t \ge 0}$:
	\[
	\forall t \ge 0, \quad P_t \mathbf{1}_{\M_0} = \mathbf{1}_{\M_0}, \quad P_t \mathbf{1}_{\M_+} = \mathbf{1}_{\M_+}.
	\]
	
	\item \textit{(Feller continuity)}
	For each $f \in C_b(\M)$, the mapping
$
	(t,x) \in \mathbb{R}_+ \times \M \mapsto P_t f(x)
$
	is continuous.
	
	\item \textit{(Tightness on $\M_0$)}
	For each compact set $K\subset \M$, the family $\{X_t^x : x\in K\cap \M_0,\ t\geq 0\}$ is tight.
\end{enumerate}
\end{hyp}
\begin{rem}\begin{enumerate}
		\item There is a slight difference in definition of the extinction set in our setting compared to \cite{benaim2018stochastic,foldes2024stochastic}. We have $\M_0$ is a closed subset of $\M\setminus \M_+$ instead of $\M_0=\M\setminus \M_+$. This setting helps to better characterize the extinction.
\item
	Tightness of the family $\{X_t^x: x \in K \cap \M_0,\; t \ge 0\}$ is often obtained by establishing a uniform bound on the moments of a proper function of $X_t$. 
	Such uniform boundedness is typically derived from a Lyapunov-type condition of the form 
	\[
	\mathcal{L} W(x) \le -\wdt W(x) + \beta,
	\]
	for some constants $\alpha, \beta > 0$ and for proper functions $W, \wdt W$.
		\end{enumerate}
\end{rem}
\begin{lm}
	For any bounded and continuous function $\wdt H:\M\mapsto R$,
	let $\wdt h_m:= \inf\{\mu \wdt H: \mu\in P_{inv}(\M_0)\}$ and $\wdt h_M:= \sup\{\mu \wdt H: \mu\in \PI(\M_0)\}$.
	Then for any $\eps>0$ and any compact set $K\subset \M$ there exists $T_0>0$ such that
	$$
	\wdt h_m-\eps < \frac 1T\int_0^T P_s\wdt H(x)ds< \wdt h_M +\eps \text{ for } x\in K\cap \M_0, T\geq T_0 
	$$
\end{lm}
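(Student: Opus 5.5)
We argue by contradiction, using the standard Krylov--Bogoliubov argument: limits of empirical occupation measures started in $\M_0$ are invariant measures supported on $\M_0$. We treat only the upper bound; the lower bound follows by applying it to $-\wdt H$.

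Suppose the upper bound fails. Then there are $\eps>0$, a compact $K$, points $x_n\in K\cap\M_0$ and times $T_n\to\infty$ with
$$
\frac{1}{T_n}\int_0^{T_n}P_s\wdt H(x_n)\,ds\ \ge\ \wdt h_M+\eps .
$$
Define the occupation measures
$$
\mu_n(\cdot):=\frac{1}{T_n}\int_0^{T_n}\PP_{x_n}\big(X_s\in\cdot\big)\,ds .
$$
Each $\mu_n$ is a probability measure on $\M$. Since $x_n\in\M_0$ and $\M_0$ is invariant, $\mu_n(\M_0)=1$. The inequality above says exactly that $\mu_n\wdt H\ge \wdt h_M+\eps$.

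\textbf{Tightness and a limit point.} By the tightness on $\M_0$ in Hypotheses \ref{hyp1}(3), for every $\delta>0$ there is a compact $C$ with $\PP_{x}(X_t\notin C)<\delta$ for all $x\in K\cap\M_0$ and $t\ge0$. Averaging in time gives $\mu_n(\M\setminus C)<\delta$ for all $n$, so $(\mu_n)$ is tight. By Prokhorov's theorem, along a subsequence $\mu_n\Rightarrow\mu$ weakly for some probability measure $\mu$. Because $\M_0$ is closed, the portmanteau theorem gives $\mu(\M_0)\ge\limsup_n\mu_n(\M_0)=1$. Because $\wdt H$ is bounded and continuous, $\mu\wdt H=\lim_n\mu_n\wdt H\ge\wdt h_M+\eps$.

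\textbf{Invariance of the limit.} This is the step needing care. Fix $t>0$ and $f\in C_b(\M)$. By the Markov property,
$$
\mu_nP_tf-\mu_nf=\frac{1}{T_n}\Big(\int_{T_n}^{T_n+t}P_sf(x_n)\,ds-\int_0^{t}P_sf(x_n)\,ds\Big),
$$
so $|\mu_nP_tf-\mu_nf|\le 2t\|f\|_\infty/T_n\to0$. By the Feller property in Hypotheses \ref{hyp1}(2), $P_tf\in C_b(\M)$, so $\mu_nP_tf\to\mu P_tf$ and $\mu_nf\to\mu f$. Hence $\mu P_tf=\mu f$ for every $f\in C_b(\M)$. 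Since bounded continuous functions determine measures on a metric space, $\mu P_t=\mu$.

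\textbf{Conclusion.} Thus $\mu\in\PI(\M_0)$, and therefore $\mu\wdt H\le\wdt h_M$. This contradicts $\mu\wdt H\ge\wdt h_M+\eps$, so some $T_0$ works uniformly over $x\in K\cap\M_0$. Note that uniformity in $x$ comes for free: the contradiction sequence already lets the starting point $x_n$ vary over $K\cap\M_0$.
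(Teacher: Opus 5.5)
Your proof is correct and takes the same approach as the paper. The paper only sketches it: tightness from Hypotheses \ref{hyp1}(3), plus the fact that weak limits of occupation measures started in $\M_0$ are invariant probability measures on $\M_0$, with a citation to \cite{benaim2018stochastic}. You supply exactly those details (Prokhorov, portmanteau for the closed set $\M_0$, invariance via the Feller property, and a contradiction sequence with varying $x_n$ to get uniformity over $K\cap\M_0$).
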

\begin{proof}
	This is a well-known result coming from the fact that $\{X_t^x, x\in K\cap \M_0, t\geq 0\}$ is tight and any weak limits of the occupation measures $\frac1t\int_0^t P_s(\cdot)ds$ as $t\to\infty$ is an invariant probability measure on $\M_0$. See \cite[Lemma 4.6]{benaim2018stochastic}, \cite[Proposition 1]{schreiber2011persistence}, \cite[Lemma 3.4]{hening2018coexistence} or more details.
\end{proof}
Our conditions will be based on the ``extended" generator and carre du champ operator.

\begin{deff}[Extended Carre du Champ operator]
Let $A\subset M$ be an open invariant set. Define $\mathcal D^{ext}_2(A)$ be set of all continuous $f : A \to \R$ such that there exist continuous functions $\Lom f : A \to \R$ and $\Gamma f : A \to [0, \infty)$ such that $M^f_t(x)$ \begin{equation}\label{eq-mar}
	M^f_t(x) := f(X^x_t) - f(x) - \int_0^t \Lom f(X^x_s) ds
\end{equation}
is a cadlag square integrable martingale and the stochastic process 
\begin{equation}
	(M^f_t(x))^2 - \int_0^t \Gamma f(X^x_s) ds
\end{equation}
is a martingale for
	all $x \in A$.
\end{deff}
\subsection{Extinction under a linearly bounded quadratic variation condition}
\begin{asp}\label{asp2}
	Suppose there exists a function: $U:\M_+\mapsto\R$ 
	such that 
	\begin{enumerate}
		\item $U\in\mathcal D^{ext}_2(\M_+)$.
		\item $\lim_{u\to\infty} \sup\{\dist(x,M_0): U(x)\geq u\}=0$.
 
	\end{enumerate}
\end{asp}
\begin{asp}\label{asp21}
	For the function $U$ in Assumption \ref{asp2}, there exists a continuous function $H:\M\mapsto R$ satisfy that
	\begin{enumerate}

\item $\op U\geq H$ on $\M_+$.
		\item $H$ is
		uniformly integrable w.r.t. $\PI(\M_0)$ and $\inf\{\mu H: \mu\in \PI(\M_0)\}=\lambda>0$
		\item There exists a compact set $C_U\subset \M$ such that 
		$\inf_{\{x\notin C_U\}} H>0$.
						\item $ \sup_{t \geq 1} \frac{1}{t} \int_0^t P_s \Gamma U(x) \, ds \leq c(x)$ where $c(x)$ is bounded in $\M_+\cap C$ for any compact subset $C$ of $\M$.
	\end{enumerate}
\end{asp}

\begin{thm}\label{thm1} 
Assume that	Assumptions  \ref{asp2}, and \ref{asp21} hold. There is $\lambda_0>0$ such that 
for any $\eps\geq 0$ and a compact subset $C$ of $M$, there exists $u_{\eps,C}\geq 0$ satisfying that
	$$
	\PP_x\left\{\lim_{t\to\infty} \dist(X(t),M_0)=0\right\}\geq \PP_x\left\{\liminf_{t\to\infty} \frac{U(X(t))}t\geq \lambda_0>0\right\}\geq 1-\eps, \forall x\in \M_+\cap C\cap \{U\geq u_{\eps,C}\}.
	$$ 

\end{thm}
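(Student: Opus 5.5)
We follow the Benaïm-type strategy: first show that, started deep inside $\{U\ge u\}$, the drift $\op U$ is positive on average over a fixed long time window; then iterate this with a martingale law of large numbers that uses the linearly bounded quadratic variation condition (4) of Assumption~\ref{asp21}. The first inequality of the theorem is immediate from Assumption~\ref{asp2}(2). On $\{\liminf_t U(X(t))/t\ge\lambda_0\}$ we have $U(X(t))\to\infty$, hence $\dist(X(t),\M_0)\to 0$. So everything reduces to the second inequality.

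\textbf{Step 1 (averaged drift near $\M_0$).} We first truncate $H$. By uniform integrability of $H$ with respect to $\PI(\M_0)$, choose $K$ so that $\wdt H:=H\wedge K$ (bounded below by continuity on compacts and by Assumption~\ref{asp21}(3) off $C_U$, so bounded after further truncation from below) satisfies $\inf_{\mu\in\PI(\M_0)}\mu\wdt H\ge 3\lambda/4$.

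Applying the Lemma on a compact set $K'$, we get $T_0$ with $\frac1T\int_0^TP_s\wdt H(x)\,ds>\lambda/2$ for $x\in K'\cap\M_0$ and $T\ge T_0$. Feller continuity, hypothesis (2), extends this with $\lambda/4$ to a neighbourhood of $K'\cap\M_0$. By Assumption~\ref{asp2}(2), for large $u$ this neighbourhood contains $\{U\ge u\}\cap K'$.

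Outside $C_U$ we have $H>0$ directly. Combining these, we aim for: there exist $T$, $u_0$ and $\lambda_1>0$ with
\[
\E_x\int_0^T \op U(X_s)\,ds\ \ge\ \E_x\int_0^T \wdt H(X_s)\,ds\ \ge\ \lambda_1 T
\]
for $x\in\{U\ge u_0\}$ in a suitable (large) compact set. This requires $X$ to stay in $\M_+$, which holds by invariance.

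\textbf{Step 2 (supermartingale / stopping argument).} Define the stopping time $\tau=\inf\{t:U(X_t)<u_0/2\}$. By \eqref{eq-mar} we write
\[
U(X_t)=U(x)+\int_0^t\op U(X_s)\,ds+M^U_t .
\]
Condition (4) gives $\E\langle M^U\rangle_t\le c(x)t$. From this we control the martingale in two ways. Doob's inequality gives $\PP(\sup_{s\le t}|M_s|\ge a+\delta s)$ small uniformly over the starting points. A strong law for martingales with $\langle M\rangle_t=O(t)$ in mean gives $M_t/t\to0$ a.s.; this is obtained from the exponential/Burkholder bounds applied on dyadic blocks $[2^k,2^{k+1}]$ and summed.

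For the drift we use the Markov property at times $nT$. Set
\[
S_n:=\sum_{k<n}\Bigl(\int_{kT}^{(k+1)T}\op U\,ds-\E\bigl[\cdot\mid\mathcal F_{kT}\bigr]\Bigr),
\]
whose increments have controlled second moments. On $\{\tau=\infty\}$ this yields $\liminf U(X_t)/t\ge\lambda_1/2=:\lambda_0$.

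Finally we bound $\PP_x(\tau<\infty)$. Choosing $u_{\eps,C}$ large makes $u_{\eps,C}-u_0/2$ dominate the fluctuation: $\PP(\inf_t(M_t+\lambda_1t/2)\le -(u-u_0/2))\le \eps$. This follows from a maximal inequality over the blocks $[2^k,2^{k+1}]$, giving a bound $\sum_k c\,2^k/(u+\lambda_1 2^{k})^2$ that is small for large $u$.

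\textbf{Main obstacle.} The hard part is uniformity. The drift estimate of Step 1 and the constant $c(x)$ must hold uniformly along the path, but the path may leave the compact set $C$, and $c(\cdot)$ is only locally bounded. We would first try to handle this with a restart scheme. Using tightness, hypothesis (3), together with Assumption~\ref{asp21}(3), the process either remains in a large compact set where all constants are uniform, or is outside $C_U$, where $H>0$ pointwise already gives positive drift. We then apply the strong Markov property at the times $kT$ to bound conditional increments uniformly. If $c(x)$ cannot be made uniform along the path, we fall back on using Assumption~\ref{asp21}(4) only at the initial point and on $\E_x\langle M\rangle_t\le c(x)t$ for all $t\ge1$, which suffices for the Doob-block estimate above.
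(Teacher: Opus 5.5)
\textbf{There is a genuine gap, in the drift part of your Step 2.} Your reduction of the first inequality via Assumption~\ref{asp2}(2) is fine. So is your control of $M^U_t$ by Doob's inequality on dyadic blocks, which uses only $\E_x\langle M^U\rangle_t\le c(x)t$ at the initial point; the paper needs exactly this in Lemma~\ref{lm1-thm1}.

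The problem is the conditioning at deterministic times $kT$. This needs $\E_y\int_0^T\wdt H(X_s)\,ds\ge\lambda_1T$ at $y=X(kT)$. Your Step 1 gives that only for $y\in\{U\ge u_0\}\cap K'$ with $K'$ compact, because the Lemma and Feller continuity are local. Since $\{U\ge u\}$ is typically unbounded, $X(kT)$ can lie outside every fixed compact set while $U(X(kT))$ is large. For such $y$, knowing $H>0$ off $C_U$ does not help. The path may enter $C_U$ at some $\sigma\in(0,T)$, and the remaining stretch can then contribute as little as $-\wdt h\,(T-\sigma)$. That stretch is uncontrolled when $T-\sigma<T_0$ or when $U(X_\sigma)$ is not large.

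Your ``restart scheme'' does not close this. Hypothesis~\ref{hyp1}(3) gives tightness only for initial points in $K\cap\M_0$, not for the process in $\M_+$, so it cannot confine the path to a compact set. Separately, $S_n$ should be built from the bounded $\wdt H$, not from $\op U$, whose block increments need not have finite second moments.

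\textbf{The missing idea is the paper's stopping-time block decomposition.}
\begin{itemize}
\item Use blocks of length $T_2=n_0T_0$ with $n_0=\lceil\wdt h+2\rceil$, and cut each block at the first entrance time $\xi_n$ into the compact $C_U$.
\item On $[nT_2,\xi_n]$, use $\wdt H\ge 1$ pointwise.
\item Condition on $\F_{\xi_n}$ by the strong Markov property, not on $\F_{nT_2}$. Then the Feller estimate is only ever applied at a point of $C_U$ and over window lengths in the compact range $[T_0,T_2]$, where it is uniform.
\end{itemize}
Each block then gains a definite amount in every case:
\begin{itemize}
\item If there is no entrance, the gain is $T_2$.
\item If $\xi_n\le nT_2+T_1$ with $T_1=(n_0-1)T_0$, the remaining window has length at least $T_0$ and yields at least $0.7\lambda T_0$.
\item If entrance is late, the pre-entrance gain, at least $T_1$, dominates the worst possible loss $\wdt hT_0$.
\end{itemize}
The requirement $U(X(\xi_n))\ge\wdt u$ is not secured by a stopping time on $U$. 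It comes from a first-failure contradiction argument on the good event for $M^U$ and $\sum_k\wdt\Delta_k$. Without something of this kind, your Step 2 gives no positive conditional drift per block, and the bound on $\PP_x(\tau<\infty)$ does not follow.
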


\begin{rem}
	Let's compare to the main result in  \cite{foldes2024stochastic}, which proves Theorem \ref{thm1} under the following assumption which is a summary of \cite[Assumpstions 3\&4]{foldes2024stochastic}.
\begin{asp}\label{asp-FS}
	There exist proper maps $W, W' : \M \to [1,\infty)$, a function $V : \M \to [0,\infty)$, 
and continuous maps $S, S' : \M \to [0,\infty)$, together with a constant $K>0$, such that:

\begin{itemize}
	\item[(i)] $W \in D_{\mathrm{ext}}^2(\M)$ and $S \in D_{\mathrm{ext}}^+(\M)$;
	
	\item[(ii)] (Drift conditions)
	\[
	L W \le K - W', \qquad L S \le K - S';
	\]
	
	\item[(iii)] (Boundary control via $V$) There exists $V \in D_{\mathrm{ext}}^2(M_+)$ such that:
	\begin{itemize}
		\item[(a)] For any sequence $(x_n) \subset \M_+$, $V(x_n)\to\infty$ implies $d(x_n,M_0)\to 0$;
		\item[(b)] $\op V$ is vanish over  $\{W'\}$, that is, $\lim_{x\to\infty}{L V}{W'}=0 $.
		\item[(c)] $L V$ extends to a continuous function $H : \M \to \mathbb{R}$ and there exists $\lambda>0$ such that
		\[
		\mu H \ge \lambda \quad \text{for all } \mu \in \mathcal{P}_{\mathrm{inv}}(\M_0);
		\]
	\end{itemize}
	
	\item[(iv)] (Carré du champ bounds)
	\begin{equation}\label{cdcb}
	\Gamma W \le K S', \qquad \Gamma V \le K S'.
	\end{equation}
\end{itemize}
	
\end{asp}
	If the assumption \eqref{asp-FS} holds, we can use the function:
	$$
	U=V-cW
	$$
	for a large constant $c$, where $V$ and $W$ satisfies \cite[asumptions 3 \& 4]{foldes2024stochastic}.
	This function will also satisfies Assumptions \ref{asp2} and \ref{asp21}. In particular, \eqref{cdcb} is often the condition used to verify condition (4) of Assumption \ref{asp21}.
	
	However, we do not need that $\op V$ vanish over $|\op W|$ as in \cite[Assumption 3(ii)]{foldes2024stochastic}.
	We just need that $\frac{|\op V|}{1+|\op W|}$ is bounded.
	(Although we will need some similar condition so that $\op V$ is uniformly integrable w.r.t. $P^{inv}(\M_0)$.)
	
Moreover, the proof in \cite{foldes2024stochastic} is based on analyzing sample paths of \(X(t)\) over the entire space \(\mathcal{M}\) at carefully chosen stopping times. Consequently, their approach requires conditions to be imposed globally on \(\mathcal{M}\).
By contrast, our approach focuses on the behavior of \(X(t)\) once it remains in a neighborhood \(M_0\). This approach make it possible to weaken some of the conditions in Assumption~\ref{asp2} and \ref{asp21} by requiring them only in a vicinity of \(M_0\), rather than on the whole space \(\mathcal{M}\).
Since the current proof of Theorem~\ref{thm1} relies on certain global estimates, it would need to be modified under a localized framework to account for the absence of such global bounds. Although these modifications are not technically difficult, we still impose global conditions in this paper to better highlight the main ideas and keep the presentation accessible. The corresponding improvements will be presented in a separate technical note.	
\end{rem}

\begin{proof}[Proof of Theorem \ref{thm1}]
From Assumption \ref{asp21}-condition (1), 
by rescaling  the function $U$ by a new function $\frac{U}{\inf_{\{x\notin C_U\}}\op U}$ if needed, and then rescaling $H$ accordingly, we can assume that $H(x)\geq 1$ for all $x\notin C_U$. We can assume $\lambda<1$ just for simplicity of presentation.

Since $H$ is uniformly integrable w.r.t. $\PI(\M_0)$,
there exists an $\wdt h>0$ such that
	\begin{equation}\label{eq-11}
	\mu (H\wedge \wdt h)\geq 0.9\lambda \text{ for all }\mu\in \PI(\M_0),
	\end{equation}
	where $[H\wedge \wdt h](x)=H(x)\wedge \wdt h$.
	In view of Assumption \ref{asp21}(1), $H(x)\geq -h_1$ for all $x\in \M_0$, for some constant $h_1$.
	Without loss of generality, assume $h_1=\wdt h$, so the function 
	$$\wdt H(x):=H(x)\wedge \wdt h$$ 
	satisfies $|\wdt H(x)|\leq \wdt h$.
	Therefore, as a consequence of \eqref{eq-11}, there exists $T_0\geq 0$ such that 
	$$\frac 1T\int_0^T P_s\wdt H(x)ds\geq 0.8\lambda\,\text{ for }T\geq T_0, x\in C_U\cap M_0.$$
	Define
	\begin{equation}\label{e1.0}
	n_0=	\lceil \wdt h +2\rceil,
	\quad T_1=(n_0-1)T_0,\quad T_2=n_0T_0, \text{ which implies } 
	T_1-\wdt hT_0\geq T_0.
	\end{equation}
Because the process $X$ is Markov-Feller,
there exists a $\wdt u>0$ such that
\begin{equation}\label{e1.2}
	\frac 1T\int_0^T P_s\wdt H(x)ds\geq 0.7\lambda, \,\text{ for any } T_0\leq T\leq T_2 \text{ and } x\in C_U\cap\{U\geq \wdt u\}.
\end{equation}

To proceed, we want to estimate $U(X(t))$ through the following
\begin{eqcite}\label{e1.50}
	U(X(t))=&U(x)+\int_0^t \op U(X(s))ds + M_t^U(x)\\
	\geq& U(x) + \int_0^t \wdt H(X(s))ds + M_t^U(x).
\end{eqcite}

For each integer $n$, we define a stopping time $\xi_n\in[nT_2,(n+1)T_2]$  by \begin{equation}\label{deff-xi-n}\xi_n:=((n+1)T_2)\wedge\inf\{t\geq nT_2: X(t)\in C_U\},\end{equation}
and 
\begin{eqcite}\label{eq1.50}
	\wdt\Delta_n:= &\int_{\xi_n}^{(n+1)T_2} \wdt H(X(s))ds- \E\left[\int_{\xi_n}^{(n+1)T_2} \wdt H(X(s))ds\bigg|\F_{\xi_n}\right],
	\\
	=& \int_{\xi_n}^{(n+1)T_2} \wdt H(X(s))ds- \int_0^{(n+1)T_2-\xi_n}P_s\wdt H(X(\xi_n))ds,
\end{eqcite}
and
\begin{eqcite}\label{eq1.51}
	\wdt G_n:=&\int_{nT_2}^{(n+1)T_2}\wdt H(X(s))ds -\wdt\Delta_n.
\end{eqcite}
Therefore, we have from \eqref{e1.50}, \eqref{eq1.50} and  \eqref{eq1.51} that
\begin{eqcite}\label{e1.5}
	U(X(t))
	\geq& U(x) +\int_{nT_2}^t\wdt H(X(s))ds+\sum_{k=0}^{n-1} \wdt G_k + \sum_{k=0}^{n-1} \wdt \Delta_k + M_t^U(x), \text{ for } t\in[nT_2, (n+1)T_2). 
\end{eqcite}

To proceed, we rewrite $\wdt \Delta_n$ as 
\begin{eqcite}
	\wdt\Delta_n= 
	\int_{\xi_n}^{(n+1)T_2} \wdt H(X(s))ds- \int_0^{(n+1)T_2-\xi_n}P_s\wdt H(X(\xi_n))ds,
\end{eqcite}
and decompose $\wdt G_n$ as 
\begin{eqcite}
	\wdt G_n
	=& \int_{nT_2}^{\xi_n}\wdt H(X(s))ds+\int_0^{(n+1)T_2-\xi_n}P_s\wdt H(X(\xi_n))ds\\
	\geq & (\xi_n-nT_2) + \int_0^{(n+1)T_2-\xi_n}P_s\wdt H(X(\xi_n))ds, \text{ since } \wdt H(x)\geq 1 \text{ if } x\notin C_U. 
\end{eqcite}

Now, consider 3 events whose union contains the event $\{X(\xi_n)\notin C_U\cap \{U<\wdt u\}\}$.
\begin{itemize}
	\item 
	If $\xi_n=(n+1)T_2$, then $\wdt G_n\geq T_2$.
	
	\item If $X(\xi_n)\in C_U\cap \{U\geq \wdt u\}$ and $\xi_n\leq nT_2+T_1$, then \eqref{e1.2} implies
	$$\int_0^{(n+1)T_2-\xi_n}P_s\wdt H(X(\xi_n))ds\geq ((n+1)T_2-\xi_n)0.7\lambda\geq 0.7\lambda(T_2-T_1)=0.7\lambda T_0,$$
	which is followed by $\wdt G_n\geq 0.7\lambda T_0.$ 
	
	\item If $X(\xi_n)\in C_U\cap \{U\geq \wdt u\}$ and $\xi_n> nT_2+T_1$, then
	$$
	\wdt G_n\geq (\xi_n-nT_2)-\wdt h((n+1)T_2-\xi_n)\geq T_1-\wdt hT_0\geq T_0 \text{ (due to \eqref{e1.0})}.
	$$
\end{itemize}
Therefore, in summary, we have
\begin{eqcite}\label{e1.4}
	\wdt G_n\geq 0.7\lambda T_0 \text{ if } X(\xi_n)\notin C_U\cap \{U<\wdt u\}.
\end{eqcite}

Next, we have the following auxiliary lemma, whose proof is given in the appendix.
\begin{lm}\label{lm1-thm1}
	Consider 2 events
	$$
	A_{M}^R=\left\{|M_t^U(x)|\leq R+\frac{0.1\lambda t}{n_0},\text{ for all } t\geq 0\right\},
	$$
	and
	$$
	B_{M}^R=\left\{\sum_{k=0}^{n-1}|\wdt\Delta_k|\leq R+\frac{0.1\lambda n T_2}{n_0},\text{ for all } n\geq 0\right\}.
	$$
	For any $\eps>0$ and a compact set $C$, there exists $R=R(\eps,C)>0$ such that
	$$
	\PP_x(A_{M}^R)\geq 1-\frac\eps2 \text{ and } \PP_x(B_{M}^R)\geq 1-\frac\eps2, \text{ for any } x\in C\cap \M_+.
	$$
\end{lm}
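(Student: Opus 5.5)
The plan is to handle the two events separately with the same device: a maximal inequality for square-integrable martingales applied on dyadic time blocks. The bounds will be summed over the blocks and made smaller than $\eps/2$ by choosing $R$ large, uniformly for $x\in C\cap\M_+$.

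\emph{The event $A_M^R$.} By $U\in\mathcal D^{ext}_2(\M_+)$, $M_t^U(x)$ is a square integrable martingale with $\E_x (M^U_t)^2=\E_x\int_0^t\Gamma U(X(s))ds=\int_0^tP_s\Gamma U(x)ds\le c(x)\,t$ for $t\ge1$, by Assumption \ref{asp21}(4). Set $a:=0.1\lambda/n_0$ and $c_C:=\sup_{C\cap\M_+}c<\infty$. On $[0,1]$, Doob's $L^2$ inequality gives $\PP_x\{\sup_{t\le1}|M^U_t|>R\}\le \E_x(M_1^U)^2/R^2\le c_C/R^2$. On the block $[2^k,2^{k+1}]$, the complement of $A_M^R$ forces $\sup_{t\le 2^{k+1}}|M^U_t|>R+a2^k$, and Doob's inequality bounds this probability by
\[
\frac{c_C\,2^{k+1}}{(R+a2^k)^2}.
\]
The sum over $k\ge0$ is at most a constant times $c_C\sum_k 2^{k}/(R+a2^k)^2$. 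This tends to $0$ as $R\to\infty$ by dominated convergence, since each term is bounded by $2^{k+1}/(a^22^{2k})$. Choosing $R$ so that the total is at most $\eps/2$ gives $\PP_x(A_M^R)\ge1-\eps/2$ on $C\cap\M_+$.

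\emph{The event $B_M^R$.} By \eqref{eq1.50}, $|\wdt\Delta_k|\le 2\wdt h T_2$, and $\wdt\Delta_k$ is $\F_{(k+1)T_2}$-measurable. Since $\xi_k\ge kT_2$ is a stopping time, the tower property through $\F_{\xi_k}$ gives $\E[\wdt\Delta_k\mid\F_{kT_2}]=0$. To treat $\sum_{k<n}|\wdt\Delta_k|$ exactly as stated, I would decompose
\[
|\wdt\Delta_k|=D_k+m_k,\qquad D_k:=|\wdt\Delta_k|-\E[|\wdt\Delta_k|\mid\F_{kT_2}],\quad m_k:=\E[|\wdt\Delta_k|\mid\F_{kT_2}].
\]
Then $(\sum_{k<n}D_k)_n$ is a martingale with increments bounded by $4\wdt hT_2$, so $\E(\sum_{k<n}D_k)^2\le 16\wdt h^2T_2^2n$ uniformly in $x$. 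The same dyadic Doob argument over $n\in[2^j,2^{j+1}]$ then gives
\[
\PP_x\Big\{\Big|\sum_{k<n}D_k\Big|\le R+\frac{0.05\lambda nT_2}{n_0}\ \ \forall n\Big\}\ge1-\frac\eps2
\]
for $R$ large, uniformly in $x$.

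It then remains to show $m_k\le 0.05\lambda T_2/n_0$, or at least that $\sum_{k<n}m_k\le \frac{0.05\lambda nT_2}{n_0}+O(1)$ on a set of probability close to one. This is the step I expect to be the main obstacle. The only a priori bound available is $m_k\le2\wdt hT_2$, which is far larger than $0.05\lambda T_2/n_0=0.05\lambda T_0$. The conditional mean of $|\wdt\Delta_k|$ is a genuine fluctuation size and is not controlled by Assumptions \ref{asp2}--\ref{asp21}. My first attempt would be to bound $m_k$ by $\E[(\wdt\Delta_k)^2\mid\F_{kT_2}]^{1/2}$ and try to make it small through the choice of $T_0$. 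However, a variance of order $T_2$ only gives $m_k=O(\sqrt{T_2})$, which is not $o(T_0)$ in general.

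I therefore expect the bound with absolute values inside the sum not to be provable as worded. The martingale part of the argument does prove the corresponding bound for the signed sum $|\sum_{k<n}\wdt\Delta_k|$ via the same dyadic Doob estimate, since $\E(\sum_{k<n}\wdt\Delta_k)^2\le4\wdt h^2T_2^2 n$. That signed version is what is actually used when the lemma is substituted into \eqref{e1.5}.
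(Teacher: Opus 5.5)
Your proposal is correct once $B_M^R$ is read with the signed sum. That signed version is also all that the paper's own argument proves.

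For $A_M^R$ you take a different route from the one the paper points to. You use only the cumulative bound $\E_x(M^U_t)^2=\int_0^tP_s\Gamma U(x)\,ds\le c_C t$, which is exactly Assumption~\ref{asp21}(4), together with Doob's $L^2$ maximal inequality on dyadic blocks. This controls the supremum over continuous time directly and uniformly in $x\in C\cap\M_+$.

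The paper instead refers to ``the book'' or to Proposition~\ref{pron1}, a truncation argument for discrete sequences with $\sup_n\E|X_n|^p<\infty$, $p>1$. Applied to the increments $M^U_{nT}-M^U_{(n-1)T}$, that route needs a uniform moment bound on each increment. The Ces\`aro-type Assumption~\ref{asp21}(4) does not obviously supply such a bound. That route also needs a separate estimate for the oscillation inside each block. Its advantage is that it asks only for $p>1$ moments, which is what Theorem~\ref{thm2} later needs. For Theorem~\ref{thm1}, your argument fits the hypotheses more naturally.

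For $B_M^R$ your diagnosis is right. The paper's entire proof is the remark that $\wdt\Delta_n$ is a bounded martingale difference for $(\F_{nT_2})$. That gives exactly your dyadic Doob bound for $\bigl|\sum_{k<n}\wdt\Delta_k\bigr|$, and nothing about $\sum_{k<n}|\wdt\Delta_k|$. Only the lower bound $\sum_{k<\wdt m}\wdt\Delta_k\ge -R-0.1\lambda\wdt m T_2/n_0$ is used, in \eqref{e1.7} and in the final $\liminf$.

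The literal version can genuinely fail. Here is a sketch.
Let $Z$ be a Brownian motion with diffusivity $\sigma$ on the circle $\mathbb{T}$, and let $dY=-Y(\lambda+\sin Z+\phi(Y))\,dt$. Here $\phi\ge 0$, $\phi=0$ on $[0,1]$, and $\phi\ge 2$ on $[2,\infty)$.
Take $\M_0=\{0\}\times\mathbb{T}$ and $U=-\ln y$. Then $M^U\equiv 0$ and $\Gamma U\equiv 0$, and all hypotheses hold with $C_U=[0,2]\times\mathbb{T}$, $\wdt h=2$ and $n_0=4$.
Since $Y\to 0$ a.s., eventually $\xi_k=kT_2$ and $\wdt\Delta_k$ is the conditionally centred integral of $\sin Z$ over a block of length $T_2$.
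By the ergodic theorem, $\frac1n\sum_{k<n}|\wdt\Delta_k|$ converges a.s. to a constant of order $\sigma^{-1}\sqrt{T_2}$.
For the smallest admissible $T_0\approx 10\sigma^{-2}\lambda^{-1}$, this constant is of order $\sigma^{-2}\lambda^{-1/2}$, well above the budget $0.1\lambda T_0\approx\sigma^{-2}$.
Hence $\PP_x(B_M^R)=0$ for every $R$.

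One correction to your heuristic: with $n_0$ fixed, $\sqrt{T_2}=\sqrt{n_0T_0}$ \emph{is} $o(T_0)$. The real obstruction is that $T_0$ is fixed beforehand by the occupation-measure estimate, and the hypotheses give no variance bound that is linear in the block length.
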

Let $R$ be as in Lemma \ref{lm1-thm1} corresponding to $C_U$ and $\eps$. It is noted that $R$ depends only on $C_U$ and $\eps$, and is independent of initial value $x\in C\cap\M_+$.	
	Let \begin{equation}\label{e1.8}
		u_{\eps, C}:=\wdt u+\wdt hT_2+2R+0.2\lambda T_0.
	\end{equation} 
	
	We will show that for any initial condition $ x\in C\cap\M_+$ satisfying $U(x)>u_{\eps, C},$ we have:
	\begin{eqcite}\label{e1.6}
		X(\xi_n)\notin C_U\cap\{U<\wdt u\},\; \forall\, n\in\N \text{ for almost all } \omega \in  A_{M,\eps}^R\cup B_{M,\eps}^R. 
	\end{eqcite}
	Indeed, if \eqref{e1.6} is not true, we can find $D\subset A_{M,\eps}^R\cup B_{M,\eps}^R$ such that $\PP_x(D)>0$ and for $\omega\in D$, we have
	$$
	X(\xi_n)\in C_U\cap\{U<\wdt u\} \text{ for some } n\in\N.
	$$
	Define 
	$$
	\wdt m=\inf\left\{n\in\N:X(\xi_n)\in C_U\cap\{U<\wdt u\}\right\},
	$$
	which is finite in $D$.
	In view of \eqref{e1.4}, we have $\wdt G_k\leq 0.7\lambda T_0$, for all $k\leq \wdt m$. Combining this fact and \eqref{e1.5} leads to that for $t\in[\wdt mT_2, (\wdt m+1)T_2]$
	\begin{eqcite}\label{e1.7}
		U(X(t))
		\geq& U(x) +\int_{\wdt mT_2}^t\wdt H(X(s))ds+\sum_{k=0}^{\wdt m-1} \wdt G_k + \sum_{k=0}^{\wdt m-1}v \wdt \Delta_k + M_t^U(x)
		\\
		>& u_{\eps,C}-\wdt hT_2+0.7\lambda T_0 \wdt m - 2R - 0.2\lambda\frac{t}{n_0}
		\\
		>& u_{\eps,C}-\wdt hT_2+0.5\lambda T_0\wdt m -2R-0.2\lambda T_0 
		\\
		>& \wdt u;
	\end{eqcite}
	where the second last inequality follows the estimate
	$$
	0.2\lambda\frac{t}{n_0}\leq 0.2\lambda\frac{(\wdt m+1)T_2}{n_0}\leq 0.2\lambda T_0\wdt m  +0.2\lambda T_0, \text{ because of } T_2=n_0T_0.
	$$
	Because
	\eqref{e1.7} contradicts the definition and finiteness of $m$ based on the contradiction assumption,
\eqref{e1.6} holds true.
As a consequence, $U(X(t))>\wdt u$ for all $t\geq 0$ for almost all $\omega \in  A_{M,\eps}^R\cup B_{M,\eps}^R$. Thus, by dividing by $t$ and letting $t\to\infty$ in \eqref{e1.5}, we obtain that, 
	$$
	\liminf_{t\to\infty} \frac{U(X(t))}t\geq \frac{0.7\lambda}{n_0}=:\lambda_0,\text{ for almost all }\omega \in  A_{M,\eps}^R\cup B_{M,\eps}^R.
	$$
	By noting that $$\PP_x\left(A_{M,\eps}^R\cup B_{M,\eps}^R\right)\geq 1-\eps,$$
	the proof is complete.
\end{proof}
\begin{rem}
	The main idea is to decompose the integrals $\int_0^t \wdt H(X(s))\,ds$ along carefully chosen sub-intervals.
	More precisely, for each integer $n$, we define a stopping time $\xi_n \in [nT_2,(n+1)T_2]$ by \eqref{deff-xi-n}. 
	This represents a subtle but important modification of the decomposition used in \cite{benaim2018stochastic}.
	Instead of writing
	\[
	\int_{nT_2}^{(n+1)T_2}\wdt H(X(s))\,ds
	= \int_{nT_2}^{(n+1)T_2} P_s \wdt H\big(X(nT_2)\big)\,ds
	+ \left(
	\int_{nT_2}^{(n+1)T_2}\wdt H(X(s))\,ds
	- \int_{nT_2}^{(n+1)T_2} P_s \wdt H\big(X(nT_2)\big)\,ds
	\right),
	\]
	we wait until time $\xi_n$ and then decompose the process into its conditional expectation given $X(\xi_n)$ and a martingale increment.
	
	The key advantage is that $\op U \geq 1$ holds uniformly on $[nT_2,\xi_n]$, so no decomposition is needed before $\xi_n$. 
	This simplifies the estimates and removes several unnecessary technical conditions.
\end{rem}
\subsection{Extinction when the quadratic variation is not linearly bounded}
A key condition for Theorem~\ref{thm1} is the Carré du champ bound (see~(3) in Assumption~\ref{asp2}), which is also imposed in existing results (see, e.g.,~(iv) of Assumption~\ref{asp-FS}). However, this condition is not satisfied in some models (see examples in the next section).
The following theorem addresses this issue.

\begin{asp}\label{asp4}
	There exists a proper function $W: \M\mapsto\R$ (see Definition \ref{def-proper}) such that
	\begin{enumerate}
		\item $W\in\mathcal D^{ext}_2(\M)$, $M_t^W(x)$ is continuous.
		\item $\op W\leq K_{W}-\gamma_W W$ and $\Gamma W\leq K_{W} W^2$.
		\item $\frac{|H|^{p_0}+(\Gamma U)^{p_0/2}}{1+W}$ is bounded for some $p_0>1$.  
		\item $\op [(W)^{2}]\leq k_W (W)^{2}$. 
	\end{enumerate}
	
\end{asp}	
\begin{rem}
We assume that \(M_t^W(x)\) is a continuous martingale so as to avoid handling jump contributions. In the discontinuous case, Theorem~\ref{thm2} continues to hold under appropriate conditions ensuring sufficient control of the jump magnitudes.

Under Assumption~\ref{asp4}, the quadratic variation of the Lyapunov function may grow exponentially fast. However, the proof shows that the associated martingale admits a finite \(p\)-th moment with at most linear growth for some \(p>1\). 
\end{rem}
\begin{thm}\label{thm2} 
	Assume that Assumptions  \ref{asp2} and \ref{asp4} hold. There is $\lambda_0>0$ such that for any $\eps\geq 0$ and a compact subset $C$ of $\M$, there exists $u_{\eps,C}\geq 0$ satisfying that
	$$
	\PP_x\left\{\liminf_{t\to\infty} \frac{U_t}t\geq \lambda_0>0\right\}\geq 1-\eps \text{ for all } x\in \M_+\cap C\cap \{U\geq u_{\eps,C}\}.
	$$ 
\end{thm}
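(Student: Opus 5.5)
The plan is to rerun the proof of Theorem \ref{thm1} essentially verbatim. I keep the same rescaling, the truncation $\wdt H=H\wedge\wdt h$, and the constants $T_0,T_1,T_2,n_0,\wdt u$. I also keep the stopping times $\xi_n$ from \eqref{deff-xi-n} and the decomposition \eqref{e1.5} with the lower bound \eqref{e1.4} on $\wdt G_n$. Here I implicitly keep conditions (1)--(3) of Assumption \ref{asp21}; what Assumption \ref{asp4} replaces is only the Carré du champ condition (4). Uniform integrability of $H$ w.r.t. $\PI(\M_0)$ should also follow from condition (3) of Assumption \ref{asp4} together with $\sup_{\mu\in\PI(\M_0)}\mu W<\infty$, which comes from $\op W\le K_W-\gamma_W W$.

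None of the deterministic part of that argument used condition (4) of Assumption \ref{asp21}. That condition entered only through Lemma \ref{lm1-thm1}. So the whole task reduces to proving an analogue of Lemma \ref{lm1-thm1}: for each $\eps$ and compact $C$ there is $R$ with $\PP_x(A^R_M)\ge 1-\eps/2$ and $\PP_x(B^R_M)\ge1-\eps/2$, uniformly in $x\in C\cap\M_+$. Once this holds, the contradiction argument \eqref{e1.7} and the final $\liminf$ bound go through unchanged.

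To replace the $L^2$ estimates, I would use $p$-th moments with $p\in(1,\min\{p_0,2\}]$.

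First, a moment bound. From $\op W\le K_W-\gamma_W W$ I get $\E_x W(X_t)\le W(x)+K_W/\gamma_W$, and also $\E_x\int_0^t W(X_s)\,ds\le (W(x)+K_W t)/\gamma_W$. Condition (4) of Assumption \ref{asp4}, $\op W^2\le k_W W^2$, together with continuity of $M^W$, is used only to justify localization. It makes $W^2(X_{t\wedge\tau_k})e^{-k_W (t\wedge\tau_k)}$ a supermartingale, so the local martingales are true martingales and the expectations above are legitimate.

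Second, the martingale $M^U$. By Burkholder--Davis--Gundy with exponent $p$, Jensen (for $p\le 2$) and condition (3) of Assumption \ref{asp4},
\[
\E_x\sup_{s\le t}|M^U_s|^p\le c_p\,\E_x\Big(\int_0^t\Gamma U(X_s)ds\Big)^{p/2}\le c_p\, t^{p/2-1}\cdot\ldots
\]
I will use the cruder route $(\Gamma U)^{p/2}\le c(1+W)$ applied on unit blocks $[k,k+1]$. This gives $\E_x|M^U_{k+1}-M^U_k|^p\le c(1+\E_x W(X_k))\le c'(1+W(x))$.

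Third, the summation step. I sum over dyadic blocks and apply Doob's maximal inequality. This yields $\PP_x(\sup_{t\in[2^j,2^{j+1}]}|M^U_t|>\delta 2^j)\le c(1+W(x))\,2^{j(1-p)}\delta^{-p}$, up to a $2^{j}$ factor from the number of unit blocks; this is the step where $p>1$ is used. The natural way to get summability is the Marcinkiewicz--Zygmund/von Bahr--Esseen inequality for martingale increments, $\E|\sum_{k<n}\Delta_k|^p\le 2\sum_{k<n}\E|\Delta_k|^p\le c n(1+W(x))$. Doob then gives $\PP(\sup_{n\le 2^{j+1}}|\cdot|>\delta2^j)\le c2^{j(1-p)}$, which is summable in $j$. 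Combined with a bound on the small-$t$ range, this makes the failure probability of $A^R_M$ as small as desired by choosing $R$ large, uniformly in $x\in C$ since $W$ is bounded on compacts.

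For $B^R_M$ I treat the $\wdt\Delta_k$ the same way. They are martingale differences with respect to $\F_{(k+1)T_2}$, since the conditional expectation is taken at $\xi_k$ and $\wdt H$ is bounded. Hence $|\wdt\Delta_k|\le 2\wdt h T_2$, and the original argument of Lemma \ref{lm1-thm1} applies directly. The only bound needed is $\sum_k\E\wdt\Delta_k^2/k^2<\infty$ plus a uniform law of large numbers, which needs no Carré du champ bound. One subtlety is that \eqref{e1.5} uses $\op U\ge \wdt H$, which requires $H\ge \wdt H$; that holds by construction.

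The main obstacle I expect is the martingale $M^U$ in the second and third steps. $\Gamma U$ need not be linearly bounded, and only $\E(\Gamma U)^{p/2}$ is controlled linearly, through $W$. The delicate point is turning the $p$-th moment bound $\E|M^U_n|^p\lesssim n(1+W(x))$ into a uniform-in-time bound $|M^U_t|\le R+\delta t$ with probability $1-\eps/2$. I would first try the dyadic blocking plus Doob argument above, handling the $\sup$ within unit blocks by BDG applied to $\int_k^{k+1}\Gamma U\,ds$ on each block.
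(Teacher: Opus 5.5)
\textbf{Main gap: the per-block moment bound.} The reduction to the proof of Theorem \ref{thm1} is fine for the deterministic part. The one estimate you actually need is not established by your route.

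In your second step you bound $\E_x|M^U_{k+1}-M^U_k|^p$ via BDG by $c\,\E_x(\int_k^{k+1}\Gamma U\,ds)^{p/2}$. You then invoke ``Jensen (for $p\le 2$)'' together with $(\Gamma U)^{p/2}\le c(1+W)$. For $p<2$, the map $x\mapsto x^{p/2}$ is concave. This is the only relevant case, since Theorem \ref{thm1} already covers $p_0\ge 2$. On a unit block, concavity gives $(\int_k^{k+1}\Gamma U\,ds)^{p/2}\ge\int_k^{k+1}(\Gamma U)^{p/2}ds$, so Jensen goes the wrong way. The factor $t^{p/2-1}$ in your display belongs to the convex case.

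What must be controlled is $\int_k^{k+1}\Gamma U\,ds\lesssim\int_k^{k+1}(1+W)^{2/p_0}ds$, a superlinear power of $W$ integrated over a block. The bound $\sup_t\E_xW(X_t)<\infty$ does not control it. This is exactly where condition (4) of Assumption \ref{asp4} is needed. It is not a localization device; no localization is needed, since $M^U$ and $M^W$ are already true martingales by membership in $\mathcal D^{ext}_2$.

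The paper's mechanism, \eqref{eq-EW2}--\eqref{eq-EW2-2}, is to condition on the start of the block and use conditional Jensen, which does go the right way:
\[
\E_x\Big(\int_k^{k+1}\Gamma U(X_s)\,ds\Big)^{p/2}\le \E_x\Big(\E\Big[\int_k^{k+1}\Gamma U(X_s)\,ds\,\Big|\,\F_k\Big]\Big)^{p/2}\le c\,\E_x\Big(\int_0^1 P_s(1+W)^{2/p_0}(X_k)\,ds\Big)^{p/2}.
\]
By \eqref{eq-EW2}, $P_sW^{2/p_0}\le (P_sW^2)^{1/p_0}\le e^{k_Ws/p_0}W^{2/p_0}$. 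Hence the right side is at most $c\,\E_x(1+W(X_k))^{p/p_0}\le c(1+\E_xW(X_k))$ for $p\le p_0$. With this repair, your summation step (von Bahr--Esseen, dyadic Doob, and BDG/Markov within blocks) works and is a reasonable substitute for Proposition \ref{pron1}. So the real obstacle was the per-block moment, not the summation.

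\textbf{Extra hypothesis.} You also keep condition (3) of Assumption \ref{asp21}, i.e. $H$ bounded below and positive off a compact. The paper's proof of this theorem does not use it, and much of that proof exists precisely to avoid it. With only $|H|^{p_0}\le c(1+W)$, $H$ may be unbounded below; in the SIRS application the paper has $H\le c_1-c_2(s+x)$. Then $H\wedge\wdt h$ is unbounded, the bound $\wdt G_n\ge \xi_n-nT_2$ fails, the $\wdt\Delta_k$ are no longer bounded, and the verbatim reduction to Theorem \ref{thm1} collapses.

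The paper replaces that hypothesis with the pathwise estimate \eqref{eq-Wq} on $\int_0^tW^q(X_s)\,ds$, where $q=2/(1+p_0)$. This comes from the It\^o expansion of $W^q$, again using condition (4) through \eqref{eq-EW2-2}. Because $|H|\lesssim W^{1/p_0}$ and $q>1/p_0$, it shows that the time average of $\wdt H_2=g(1-H)$, which is supported off $C_1$, is at most $0.1\lambda$ with high probability. The argument of Theorem \ref{thm1} is then run on the bounded function $\wdt H_1$, which equals $1$ off $C_2$. So even after the fix above, your proof covers the statement only under that additional hypothesis.
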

Next, we present the following result, whose proof is given in the appendix. It can be viewed as a modification of \cite[Theorem~2.19]{hall2014martingale}.
\begin{pron}\label{pron1}
	Let $\{X_n\}$ be a sequence of random variables  and $\F_n$ adapted. If
	$$\sup_{n}\E X_n^p <\infty, p>1$$
	then for any $\eps>0$, $\delta>0$,  there exists $K>0$ such that 
	$$
	\PP\left\{\sum_{i=1}^n\left[X_i-\E[X_i\big|F_n]\right]\leq K+\delta n,\;\forall n\right\}\geq 1-\eps.
	$$	
\end{pron}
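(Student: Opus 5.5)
Throughout, I read the hypothesis as $\sup_n\E|X_n|^p<\infty$ and the centering as $\E[X_i|\F_{i-1}]$, the usual martingale-difference setting. The printed ``$\F_n$'' looks like a typo. With $\F_n$ in place of $\F_{i-1}$, $\E[X_i|\F_n]=X_i$ and the statement is trivial.

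The plan is to show that $S_n:=\sum_{i=1}^n D_i$, with $D_i:=X_i-\E[X_i|\F_{i-1}]$, is an $(\F_n)$-martingale whose $p$-th moment grows at most linearly in $n$. A dyadic Doob maximal inequality then gives the uniform bound $S_n\le K+\delta n$. Since $\sup_n\E|X_n|^p<\infty$ for some $p>1$ implies the same for $p'=\min(p,2)$ by Lyapunov's inequality, I assume without loss of generality that $1<p\le 2$.

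\textbf{Step 1 (moment bound).} By conditional Jensen, $\E|D_i|^p\le 2^{p-1}\big(\E|X_i|^p+\E|\E[X_i|\F_{i-1}]|^p\big)\le 2^p c$, where $c:=\sup_n\E|X_n|^p$. For martingales with $1<p\le2$, the von Bahr--Esseen inequality gives $\E|S_n|^p\le 2\sum_{i=1}^n\E|D_i|^p\le 2^{p+1}c\,n$. One could also use the Burkholder inequality $\E|S_n|^p\le C_p\E\big(\sum_i D_i^2\big)^{p/2}$ together with subadditivity of $x\mapsto x^{p/2}$. Deriving this linear $L^p$ growth is the only substantive step.

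\textbf{Step 2 (dyadic blocks).} For $j\ge0$, Doob's maximal inequality applied to the submartingale $|S_k|^p$ gives
\begin{equation*}
\PP\Big\{\max_{k\le 2^{j+1}}|S_k|>\delta 2^{j}\Big\}\le \frac{\E|S_{2^{j+1}}|^p}{\delta^p2^{jp}}\le \frac{2^{p+2}c}{\delta^p}\,2^{j(1-p)}.
\end{equation*}
Because $p>1$, the right-hand side is summable in $j$. Choose $j_0$ so that the tail sum over $j\ge j_0$ is at most $\eps/2$.

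On the complement of the union of these events, every $n\ge 2^{j_0}$ lies in some block $[2^j,2^{j+1})$ with $j\ge j_0$. For such $n$ we get $S_n\le\delta 2^j\le\delta n$.

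\textbf{Step 3 (initial segment).} For the finitely many $n<2^{j_0}$, Doob's inequality again gives
\begin{equation*}
\PP\Big\{\max_{k\le 2^{j_0}}|S_k|>K\Big\}\le \frac{2^{p+1}c\,2^{j_0}}{K^p}.
\end{equation*}
This is at most $\eps/2$ once $K$ is chosen large enough.

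Combining Steps 2 and 3 yields $S_n\le K+\delta n$ for all $n$ with probability at least $1-\eps$. We in fact obtain the two-sided bound $|S_n|\le K+\delta n$, which is what is needed for the event $B_M^R$ in Lemma \ref{lm1-thm1}.
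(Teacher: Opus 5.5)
Your proof is correct, and your reading of the two typos matches the paper's appendix, which uses $\sup_n\E|X_n|^p$ and $\E[X_i|\F_{i-1}]$. It does, however, take a different route from the paper.

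The paper follows the classical proof of the martingale strong law (Hall--Heyde):
\begin{itemize}
\item it truncates $X_n=Y_n+Z_n$ with $Y_n:=X_n\1_{\{|X_n|\le n\}}$ and $Z_n:=X_n\1_{\{|X_n|>n\}}$;
\item it controls the bounded part in $L^2$, via $\E Y_n^2\le n^{2-p}\E|X_n|^p$ and Doob's $L^2$ inequality for the weighted martingale $\sum_i i^{-1}(Y_i-\E[Y_i|\F_{i-1}])$;
\item it controls the tail part in $L^1$, via $\E|Z_n|\le n^{1-p}\E|X_n|^p$ and Markov's inequality;
\item it converts both weighted bounds into bounds on the unweighted sums using the summation-by-parts Lemma \ref{lm-a1}, and handles the first $n_0$ terms by Markov.
\end{itemize}
You instead obtain $\E|S_n|^p\le Cn$ in one stroke from the von Bahr--Esseen (or Burkholder) inequality. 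You then replace the Kronecker-type step by Doob's $L^p$ maximal inequality on dyadic blocks.

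The paper's route needs only second moments and elementary tools. Yours is shorter and gives explicit $j_0$ and $K$ in terms of $c,p,\eps,\delta$, at the price of citing an $L^p$ martingale moment inequality as a black box. Neither is more general: both adapt directly to the weaker hypothesis $\sum_i i^{-p}\E|X_i|^p<\infty$, and both yield the two-sided bound. Your explicit reduction to $p\le 2$ is also silently needed in the paper, since $\E Y_n^2\le n^{2-p}\E|X_n|^p$ fails for $p>2$ (take $X_n\equiv 1$).

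One small correction to your closing remark. As printed, $B_M^R$ bounds $\sum_k|\wdt\Delta_k|$, and a bound on $|S_n|$ does not control that quantity. What the proof of Theorem \ref{thm1} actually uses is only a lower bound on the signed sums $\sum_k\wdt\Delta_k$, and your estimate does provide that.
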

\begin{proof}[Proof of Theorem \ref{thm2}]
We will only consider the case when $p_0<2$ because if $p_0\geq 2$, we can use Theorem \ref{thm1}.
Moreover, in this following, excepting for $K_W$ it is a fixed constant given in Assumption \ref{asp4}(2), $K$ will represent a generic constant, which is different in each its appearance. We will indicate which $K$ depends on, if it is necessary.
	
From Assumption \ref{asp4} - condition (2), 
by applying It\^o's formula for $e^{\gamma_Wt}W(X(t))$ we obtain that
\begin{equation}\label{bound-eW}
	\E_x e^{\gamma_Wt}W(X(t))\leq W(x)+\frac{K_W(e^{\gamma_Wt}-1)}{\gamma_W}.
\end{equation}
Therefore, for each compact $C\in \M$, 
\begin{equation}\label{eq-boundEW}
\sup_{x\in C, t\geq 0} \{P_tW(x)=\E_x W(X(t))\}=K_C<\infty.
\end{equation}

Similarly, thanks to Assumption \ref{asp4} - condition (4), by applying It\^o's formula  for $e^{-k_Wt}W^2(X(t))$, we obtain that
\begin{equation*}
	\E_x e^{-k_Wt}W^2(X(t))\leq W^2(x).
\end{equation*}
As a consequence,
\begin{equation}\label{eq-EW2}
\E_xW^2(X(t))=P_tW^2(x)\leq e^{k_Wt}W^2(x).
\end{equation}
Thus, from Markov's property \& Jensen's inequality:
\begin{equation}\label{eq-EW2-2}
\E \left(\int_{t_1}^{t_2} W^{2}(X(s))ds\right)^{1/2} \leq\frac 1{\sqrt {k_W}} e^{\frac{k_W}2(t_2-t_1)}\E W(X(t_1)).
\end{equation}

In view of Assumption \ref{asp4}- condition (2) and It\^o's formula, for $q=2/(1+p_0)<1$, we have
\begin{equation}\label{eq-W1q-bound}
[W(X(t))]^q\leq [W(x)]^q+ qK_{W}t - q\gamma_W \int_0^t [W(X(s))]^q ds + M_t^{W^q},
\end{equation}
where  $M_t^{W^q}$ is defined as \eqref{eq-mar} with the function $f$ being $W^q$.
Moreover, the quadratic variation $\langle M^{W^q}\rangle_t$ of $M_t^{W^q}$ satisfies that
\begin{equation}\label{eq-qv}
\langle M^{W^q}\rangle_t=\int_0^t [\Gamma (W)^q](X(s))ds=q^2\int_0^t \frac{[\Gamma W](X(s))}{[W(X(s))]^{2-2q}}ds\leq K \int_0^t [W(X(s))]^{2q} ds,
\end{equation}
for some constant $K$, due to Assumption \ref{asp4}(3).

For any $T$ and integer $n$, because of Burkholder-Davis-Gundy inequality and \eqref{eq-qv}, we have
we have
\begin{equation}\label{eq-thm12-1}
\E_x \left|M_{(n+1)T}^{[W]^q}-M_{nT}^{[W]^q}\right|^{1/q}
\leq
K\E_x\Big(\int_{nT}^{(n+1)T}[W(X(s))]^{2q} ds\Big)^{1/2q}.
\end{equation}
Moreover, applying Holder inequality and \eqref{eq-EW2-2}, one has
\begin{equation}\label{eq-thm12-2}
	\E_x\Big(\int_{nT}^{(n+1)T}[W(X(s))]^{2q}ds\Big)^{1/2q}\leq \E_x\Big(\int_{nT}^{(n+1)T}[W(X(s))]^{2}ds\Big)^{1/2}\leq K  e^{\frac{k_WT}2}\E_x W(X(nT)).
\end{equation}
Combining \eqref{eq-thm12-1}, \eqref{eq-thm12-2}, and \eqref{eq-boundEW}, we get that for any compact set $C$ there is an $M_{C,q}$ depending on $C$ only such that
\begin{equation}
 \E_x \left|M_{(n+1)T}^{[W]^q}-M_{nT}^{[W]^q}\right|^{1/q}\leq M_{C,q},\text{ for all } x\in C, \;T\in [0,1].  
\end{equation}

Applying Proposition \ref{pron1} for the sequence $|M_{(n+1)T}^{[W]^q}-M_{nT}^{[W]^q}|$, for any compact set $C$,
there is a $K_1\geq 0$, which depends on  $\eps,\lambda,M_{C,q}$, such that
\begin{equation}\label{e3.11} 
\PP_x\left\{M_{nT}^{[W]^q}\leq K_1+0.1\lambda nT,\;\forall n\in\N\right\}\geq 1-\eps,\text{ for all } x\in C.
\end{equation}

In view of Lemma \ref{lm-a3}, for any $\eps>0$, we can find an $h_q>0$ such that
$$
\sum_{i=1}^\infty \frac{M_{C,q}}{(h_q+0.05\lambda i)^{1/q}}\leq \eps.
$$
By Markov's inequality, 
$$
\PP_x\left\{\sup_{s\in[0,T_2]}\left|M_{kT_2+s}^{[W]^q}-M_{kT_2}^{[W]^q}\right|\leq h_q+0.05\lambda k\right\}\geq 1-\frac{M_{C,q}}{(h_q+0.05\lambda k)^{1/q}}
$$
which leads to
\begin{equation}\label{e3.12}
\PP_x\left\{\sup_{s\in[0,T_2]}\left|M_{kT_2+s}^{[W]^q}-M_{kT_2}^{[W]^q}\right|\leq h_q+0.05k, \text{ for all } k\in\N\right\}\geq 1-\sum_{k=1}^\infty\frac{M_{C,q}}{(h_q+0.05\lambda i)^{1/q}}\geq 1-\eps.
\end{equation}
Combining \eqref{e3.11} and \eqref{e3.12}, we have  for any compact set $C$, with  $K_{2}:=K_1+h_q$ depending only on $C$ and $\eps$ that
%
%
%
\begin{equation}\label{eq-MtQ-bound}
\PP_x\left\{M_{t}^{[W]^q}\leq K_{2}+0.1\lambda t,\;\forall t\geq 0\right\}\geq 1-\eps,\text{ for all } x\in C.
\end{equation}
Combining \eqref{eq-W1q-bound} and \eqref{eq-MtQ-bound} leads to that
\begin{equation}\label{eq-Wq}
\PP_x\left\{\int_0^t [W(X(s))]^q ds \leq K_2+(0.1\lambda+qK_{W}) t,\;\forall t\geq 0\right\}\geq 1-\eps,\text{ for all } x\in C.
\end{equation}

Now, we have again that
\begin{eqcite}\label{e3.50}
	U(X(t))\geq&U(x)+\int_0^t H(X(s))ds + M_t^U(x).
\end{eqcite}
In the next step, 
we aim to write function $H\geq \wdt H_1 - \wdt H_2$, where $\wdt H_1, \wdt H_2$ are chosen as follows.
Because of \eqref{eq-EW2} and Assumption \ref{asp4}(4), one can have that $H$ is uniformly integrable w.r.t $\PI(\M_0)$.
In view of \eqref{bound-eW}, we have
$$
\lim_{t\to\infty}\E_x W(X(t) \leq K_W\gamma_W^{-1} \text{ for all } x\in \M,
$$
which leads to 
$$
\mu W\leq K_W\gamma_W^{-1} \text{ for all } \mu\in \PI(\M).
$$
Then, together with Assumption \ref{asp4} - condition (3),  it is easy to find a compact set  $C_1$ of $\M$, a constant $K_3>0$ such that
\begin{equation}\label{h-bound1}
|H(x)|+1\leq K_3(1+[W(x)]^{1/p_0}) \text{ and } \frac{0.1\lambda+qK_W}{[\inf_{x\notin  C_1}W(x)]^{q-\frac 1{p_0}}} \leq 0.1\lambda \text{ for } x\notin C_1
\end{equation} 
and
\begin{equation}\label{h-bound2}
K_3\mu\left(\1_{\{x\notin C_1\}}(1+W^{1/p_0})\right)\leq 0.1\lambda  \text{ for all } \mu\in \PI(\M).
\end{equation}
Since $W$ is a proper function, there is a $C_2\supset C_1$ being a compact subset of $\M$ so that $\inf_{x\notin C_2} W(x)> \sup_{x\in C_1} W(x)$.
It is easy to construct a continuous function $g:\M\mapsto [0,1]$ such that
$g(x)=0$ if $x\in C_1$ and $g(x)=1$ if $x\notin C_2$.
Let $\wdt H_1= g + (1-g)H$ and $\wdt H_2= \wdt H_1-H$.
Then, 
$$|\wdt H_1(x)|\leq |H(x)|+1 \text{ and } |\wdt H_2(x)|\leq |H(x)|+1.$$

We also have that
\begin{equation}
	\wdt H_1(x)=1 \text{ if } x\notin C_2 \text{ and } \wdt H_2(x)=0 \text { if } x\in C_1.
\end{equation}
Due to \eqref{h-bound1} and \eqref{h-bound2}, $\wdt H_1,\wdt H_2$ are uniformly integrable  for $\mu\in \PI(\M_0)$ and
\begin{equation}\label{muh2}
	\mu |\wdt H_2| \leq \int_{\M\setminus C_1} (|H(x)|+1)\mu(dx)\leq 0.1\lambda 
\end{equation}

From \eqref{e3.50}, we obtain that
\begin{eqcite}\label{e3.5}
	U(X(t))\geq U(x) + \int_0^t \wdt H_1(X(s))ds - \int_0^t \wdt H_2(X(s))ds+M_t^U(x).
\end{eqcite}

We have that on the event $\big\{\int_0^t [W(X(s))]^q ds \leq K_{2}+(0.1\lambda+qK_{W}) t,\;\forall t\geq 0\big\}$
\begin{align*}
	\limsup_{t\to\infty}\frac 1t\int_0^t |\wdt H_2(X(s))| ds&=\limsup_{t\to\infty}\frac{1}{t} \int_0^t \mathbf{1}_{\{X(s)\notin  C_1\}}(|H(X(s))|+1)ds\\
	&\leq \limsup_{t\to\infty}\frac{1}{t} \int_0^t \mathbf{1}_{\{X(s)\notin  C_1\}}(W(X(s)))^{\frac 1{p_0}}ds\\
	&\leq \limsup_{t\to\infty}\frac{1}{t} \int_0^t \frac{(W(X(s)))^{q}}{[\inf_{x\notin  C_1}W(x)]^{q-\frac 1{p_0}}}ds \\
	&\leq \frac{0.1\lambda+qK_W}{[\inf_{x\notin  C_1}W(x)]^{q-\frac 1{p_0}}} < 0.1\lambda.
\end{align*}
As a result, one has
\begin{equation}\label{averagewdth2}
\PP_x\left\{\limsup_{t\to\infty}\frac 1t\int_0^t \wdt H_2(X(s)) ds \leq 0.1\lambda\right\}\geq 1-\eps.
\end{equation}

By a similar process of getting \eqref{eq-MtQ-bound}, we can obtain that 
\begin{equation}\label{averageMsU} 
\PP_x\left\{\limsup_{t\to\infty}\frac 1t\int_0^t M_s^U(x) ds \leq 0.1\lambda\right\}\geq 1-\eps.
\end{equation}


Now, note that $\wdt H_1(x)\geq 1$ for all $x\notin C_2$ and
$\mu\wdt H_1(x)=\mu\wdt H +\mu\wdt H_2\geq 0.9\lambda \text{ for } \mu\in \PI(\M_0)$, which is due to \eqref{muh2}. With these properties, to estimate $\frac 1t\int_0^t\wdt H_1(X(s))ds$, we 
break down into $\wdt G_k$ and $\wdt \Delta_k$ like \eqref{eq1.50} and \eqref{eq1.51} and
we can use the arguments in Theorem \ref{thm1} and \eqref{averagewdth2} and \eqref{averageMsU} to obtain that
\begin{equation} 
\PP_x\left\{\limsup_{t\to\infty}\frac 1t\int_0^t \wdt H_1(X(s)) ds \geq 0.7\lambda\right\}\geq 1-3\eps \text{ if } U(x)\geq  u_{\eps,C}
\end{equation}
for a sufficiently large $u_{\eps,C}$.
The proof is complete.
\end{proof}

\subsection{Extention to multiple Lyapunov functions}

\begin{asp}\label{asp5}
		Suppose there exists  functions: $U_i: \M_+\mapsto\R$, continuous functions: $H_i: \M\mapsto\R, i=1,\cdots, n$ satisfying
		\begin{enumerate}
			\item $U_i\in\mathcal D^{ext}_2(\M_+)$.
			\item $\lim_{u\to\infty} \sup\{\dist(x,M_0): U(x):=\min_{1\leq i\leq n}U_i(x)\geq u\}=0$.
			\item $ \sup_{t \geq 1} \frac{1}{t} \int_0^t P_s \Gamma U_i(x) \, ds \leq c(x)$ where $c(x)$ is bounded in $\M_+\cap C$ for any compact subset $C$ of $M$. 
			\item $\op U_i\geq H_i$ on $\M_+$.
			\item $H_i$ is
			uniformly integrable w.r.t. $\PI(\M_0)$ and $\inf\left\{\mu H_i: \mu\in \PI(\M_0), i\in\{1,\cdots,n\}\right\}=\lambda>0$
				\item There exists a compact set $C_U\subset \M$ such that 
				$\inf_{\{x\notin C_U\}} H_i>0$.
				
			\end{enumerate}

\end{asp}
\begin{thm}\label{thm5} 
	Assume that	Assumption \ref{asp5} holds. There is $\lambda_0>0$ such that 
	for any $\eps\geq 0$ and a compact subset $C$ of $\M$, there exists $u_{\eps,C}\geq 0$ satisfying that
	$$
	\PP_x\left\{\lim_{t\to\infty} \dist(X(t),\M_0)=0\right\}\geq \PP_x\left\{\liminf_{t\to\infty} \frac{U(X(t))}t\geq \lambda_0>0\right\}\geq 1-\eps, \forall x\in \M_+\cap C\cap \{U\geq u_{\eps,C}\}.
	$$ 
\end{thm}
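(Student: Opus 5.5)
The plan is to run the argument of Theorem \ref{thm1} separately for each $U_i$, with a common set of parameters. Then take the minimum over $i$.

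First I fix uniform constants. Rescaling each $U_i$ and $H_i$, I may assume $H_i\ge 1$ off $C_U$ for every $i$ and a common $\lambda<1$. By uniform integrability of the finitely many $H_i$ there is one truncation level $\wdt h$ with $\mu(H_i\wedge\wdt h)\ge 0.9\lambda$ for all $\mu\in\PI(\M_0)$ and all $i$. The lemma on occupation measures then gives one $T_0$ such that $\frac1T\int_0^TP_s\wdt H_i\,ds\ge 0.8\lambda$ for $T\ge T_0$, $x\in C_U\cap\M_0$ and all $i$. I define $n_0,T_1,T_2$ as in \eqref{e1.0}. Feller continuity yields, for each $i$, a level $\wdt u_i$ such that \eqref{e1.2} holds on $C_U\cap\{\dist(\cdot,\M_0)\text{ small}\}$. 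By Assumption \ref{asp5}(2), a neighborhood condition of this kind is implied by $U\ge \wdt u$ for a single $\wdt u$.

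This is the key point: the Feller step \eqref{e1.2} should be phrased in terms of the minimum $U=\min_iU_i$, not each $U_i$. Concretely, on the compact set $C_U$, points with $U\ge\wdt u$ lie within a small distance of $\M_0$. Continuity of $x\mapsto\frac1T\int_0^TP_s\wdt H_i(x)ds$, uniform over $T\in[T_0,T_2]$, then gives the bound $\ge0.7\lambda$ there.

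Next I introduce the stopping times $\xi_n$ of \eqref{deff-xi-n}, which do not depend on $i$. I decompose $\int\wdt H_i(X)$ into $\wdt G_{n,i}$ and $\wdt\Delta_{n,i}$. The three-case analysis shows $\wdt G_{n,i}\ge0.7\lambda T_0$ for all $i$ whenever $X(\xi_n)\notin C_U\cap\{U<\wdt u\}$. I then apply Lemma \ref{lm1-thm1} to each $U_i$ with $\eps/(2n)$, which is possible by (3). This gives a single $R$ and an event of probability at least $1-\eps$ on which all $M^{U_i}$ and all $\sum_k|\wdt\Delta_{k,i}|$ satisfy the linear bounds. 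Setting $u_{\eps,C}=\wdt u+\wdt hT_2+2R+0.2\lambda T_0$, I run the contradiction argument with $\wdt m$ equal to the first $n$ with $X(\xi_n)\in C_U\cap\{U<\wdt u\}$. Since $U_i(x)\ge U(x)>u_{\eps,C}$ for every $i$, the estimate \eqref{e1.7} gives $U_i(X(t))>\wdt u$ for every $i$, hence $U(X(t))>\wdt u$, a contradiction. So on the good event $\wdt G_{n,i}\ge0.7\lambda T_0$ for all $n,i$. This yields $\liminf U_i(X(t))/t\ge\lambda_0=0.7\lambda/n_0$ for each $i$, and therefore the same for the minimum $U$.

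Finally, $U(X(t))\to\infty$, and by Assumption \ref{asp5}(2) this gives $\dist(X(t),\M_0)\to0$, which is the first inequality.

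The main obstacle is the coupling across $i$. A failure of one $U_i$ can occur only through $U$ dropping, and both the definition of $\wdt m$ and the Feller neighborhood must be expressed through $U$. It is this that makes the per-$i$ estimates contradict simultaneously.
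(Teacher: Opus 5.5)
Your proposal is correct and follows essentially the same route as the paper's sketched proof. You use one truncation level $\wdt h$ and common $T_0,n_0,T_2$; a single $\wdt u$ chosen through $U=\min_i U_i$ and Assumption~\ref{asp5}(2); the $i$-independent stopping times $\xi_n$ with per-$i$ decompositions; a union bound giving one $R$ for all $i$; and the contradiction argument with the first bad index defined via $U$. The only adjustments are inherited from Lemma~\ref{lm1-thm1}: the bound you need, and can actually prove since the increments are bounded martingale differences, is on $\bigl|\sum_k\wdt\Delta_{k,i}\bigr|$ rather than $\sum_k|\wdt\Delta_{k,i}|$; and on the good event the linear bounds give only $\liminf_{t\to\infty}U(X(t))/t\ge 0.5\lambda/n_0$. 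That is still a valid $\lambda_0$; to get $0.7\lambda/n_0$ you would have to add the a.s.\ strong law for $M^{U_i}$ and $\sum_k\wdt\Delta_{k,i}$.
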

\begin{proof}[Sketch of Proof of Theorem \ref{thm5}]
	The proof is largely analogous to that of Theorem \ref{thm1}. 
	
	We can find $\widetilde h$ such that 
	$|\widetilde H_i(x):=H(x)\wedge \widetilde h|\leq \widetilde h$ for all $x\in\M$, and 
	\[
	\frac{1}{T}\int_0^T P_s \widetilde H_i(x)\,ds \geq 0.8\lambda,
	\quad \text{for } T\geq T_0,\ x\in C_U\cap \M_0.
	\]
	
	Then, we define $T_0,T_2$ as in \eqref{e1.0}, and choose $\widetilde u$ such that \eqref{e1.2} holds with $\widetilde H$ replaced by $\widetilde H_i$ for all $i\in\{1,\dots,m\}$.
	
	With $\xi_n$ defined in \eqref{deff-xi-n}, and $\widetilde \Delta_{i,k}$ and $\widetilde G_{i,k}$ defined in \eqref{eq1.50} and \eqref{eq1.51} (with $\widetilde H$ replaced by $\widetilde H_i$), we obtain the following inequality, analogous to \eqref{e1.5}:
	\begin{eqcite}\label{e5.5}
		U_i(X(t))
		\geq U_i(x) + \int_{nT_2}^t \widetilde H_i(X(s))\,ds
		+ \sum_{k=0}^n \widetilde G_{i,k} 
		+ \sum_{k=0}^n \widetilde \Delta_{i,k} 
		+ M_t^{U_i}(x),
		\quad t\in[nT_2,(n+1)T_2).
	\end{eqcite}
	
	Similar to Lemma \ref{lm1-thm1}, for any $\varepsilon>0$ and any compact set $C\subset\M$, there exists $R>0$ such that
	\begin{equation}\label{e5.6}
		\PP_{x}\Bigg(
		\Big\{|M_t^{U_i}(x)| \leq R+\tfrac{0.1\lambda t}{n_0},\ \forall t\geq 0\Big\}
		\cap 
		\Big\{\Delta_{i,n}\leq R+\tfrac{0.1\lambda n}{n_0},\ \forall n \geq 0\Big\}
		\Bigg)\geq 1-\varepsilon.
	\end{equation}
	
	Using \eqref{e5.5} and \eqref{e5.6}, and mimicking the remainder of the proof of Theorem \ref{thm1} after \eqref{e1.5}, we obtain the desired conclusion.
\end{proof}
Similarly, we can extend Theorem \ref{thm2} to
\begin{thm}\label{thm6}
	The conclusion of Theorem \ref{thm5} remains valid if part (3) of Assumption \ref{asp5} is replaced by the following.
	
	There exists a proper function $W:\M\to\R$ such that:
	\begin{enumerate}
		\item $W\in\mathcal D^{ext}_2(\M)$.
		
		\item $\mathcal{L} W \leq K_W - \gamma_W W$ and $\Gamma W \leq K_W W^2$.
		
		\item The function 
$
		\frac{|H_i|^{p_0} + (\Gamma U_i)^{p_0/2}}{1+W}
$
		is bounded for some $p_0>1$.
		
		\item $\mathcal{L}(W^2) \leq k_W W^2$.
	\end{enumerate}
\end{thm}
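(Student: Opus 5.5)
The plan is to combine the multi-function bookkeeping of Theorem \ref{thm5} with the moment-control machinery from the proof of Theorem \ref{thm2}. Throughout, the index set is finite, $i\in\{1,\dots,n\}$. Each estimate used for a single Lyapunov function will be proved for every $U_i$ with failure probability at most $\eps/n$ (or a fixed multiple of it), and then intersected. As in Theorem \ref{thm5}, extinction comes from $U=\min_i U_i$: it suffices to show that, on an event of probability at least $1-\eps$, we have $\liminf_{t\to\infty} U_i(X(t))/t\geq\lambda_0$ for every $i$. Part (2) of Assumption \ref{asp5} then gives $\dist(X(t),\M_0)\to0$.

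\textbf{Step 1 (controlling $W$).} The first step depends only on $W$, not on $i$. From $\op W\leq K_W-\gamma_W W$ and $\op(W^2)\leq k_W W^2$ we recover \eqref{bound-eW}, \eqref{eq-boundEW}, \eqref{eq-EW2} and \eqref{eq-EW2-2}. Taking $q=2/(1+p_0)$, the BDG bound \eqref{eq-thm12-1}--\eqref{eq-thm12-2}, Proposition \ref{pron1} and Lemma \ref{lm-a3} then give the pathwise time-average bound \eqref{eq-Wq} on $\int_0^t W^q(X(s))\,ds$, valid uniformly for $x$ in a compact set $C$. Since $p_0>1$, we have $q>1/p_0$. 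So, using the boundedness of $|H_i|^{p_0}/(1+W)$, I choose compacts $C_1\subset C_2$ and a cutoff $g$ that work for all $i$ simultaneously; this is possible because finitely many bounds of the form \eqref{h-bound1}--\eqref{h-bound2} can be enforced by enlarging $C_1$. I then split $H_i\geq \wdt H_{1,i}-\wdt H_{2,i}$ with $\wdt H_{1,i}=g+(1-g)H_i$. The estimate in the proof of Theorem \ref{thm2} then yields $\limsup_t \frac1t\int_0^t|\wdt H_{2,i}(X(s))|\,ds\leq 0.1\lambda$ for all $i$ on a single event of probability at least $1-\eps$. This event comes from \eqref{eq-Wq} and does not depend on $i$.

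\textbf{Step 2 (martingale terms).} Next I treat the martingales $M^{U_i}_t$. Since $(\Gamma U_i)^{p_0/2}\leq K(1+W)$, BDG gives
\[
\E_x\bigl|M^{U_i}_{(k+1)T}-M^{U_i}_{kT}\bigr|^{p_0}\leq K\,\E_x\Bigl(\int_{kT}^{(k+1)T}\Gamma U_i(X(s))\,ds\Bigr)^{p_0/2}.
\]
Applying Hölder with exponent $2/p_0$ and then the bound $\Gamma U_i\leq K(1+W)^{2/p_0}$, this is controlled by a moment of $W$ over a window of length $T$. That moment is bounded by $K_C$ through \eqref{eq-EW2}/\eqref{eq-EW2-2} combined with \eqref{eq-boundEW}. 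Proposition \ref{pron1}, together with the Markov-inequality sup-within-window argument of \eqref{e3.12}, then gives $|M^{U_i}_t|\leq R+0.1\lambda t/n_0$ for all $t$, with probability at least $1-\eps/n$ for each $i$. The same reasoning handles the centered increments $\wdt\Delta_{i,k}$ built from $\wdt H_{1,i}$. These are differences of integrals of a function that is bounded on $C_2$ and controlled by $W^{1/p_0}$ off it, so their $p_0$-moments are uniformly bounded, and Proposition \ref{pron1} applies again.

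\textbf{Step 3 (stopping-time argument).} Finally I rerun the proof of Theorem \ref{thm1}, with $C_U$ replaced by $C_U\cup C_2$, for each $\wdt H_{1,i}$ truncated at a common level $\wdt h$. For each $i$ we have $\mu\wdt H_{1,i}\geq 0.9\lambda$ on $\PI(\M_0)$, and $\wdt H_{1,i}\geq1$ off the compact set. I take common constants $T_0,T_2,n_0$, and Feller continuity gives a common $\wdt u$ for which \eqref{e1.2} holds for every $i$. The stopping times $\xi_n$ of \eqref{deff-xi-n} are shared across all $i$.

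The contradiction argument has one real change. It must be run on the hitting index $\wdt m=\inf\{n: X(\xi_n)\in C_U\cap\{U<\wdt u\}\}$, where $U=\min_iU_i$. The bound \eqref{e1.4} is then applied to every $i$ at once: before $\wdt m$ we have $\min_i U_i(X(\xi_k))\geq\wdt u$, hence $U_i(X(\xi_k))\geq\wdt u$ for every $i$. On the intersected good event, \eqref{e1.7} then holds for each $i$, with the extra drift loss $-0.1\lambda t$ from the $\wdt H_{2,i}$ term absorbed into the margin. This forces $U(X(t))>\wdt u$, a contradiction, provided $U(x)\geq u_{\eps,C}$.

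The main obstacle is this last absorption. The bound on $\wdt H_{2,i}$ from Step 1 is only asymptotic (a $\limsup$), whereas the contradiction argument needs a bound valid for all $t$. I would fix this by using the uniform-in-$t$ form of \eqref{eq-Wq} directly, which gives $\int_0^t|\wdt H_{2,i}(X(s))|\,ds\leq K+0.1\lambda t$ for all $t$ on the good event. The constant $K$ is then absorbed into $u_{\eps,C}$, and the slope $0.1\lambda$ is absorbed by adjusting the fractions $0.7,0.5$ and shrinking $\lambda_0$. Dividing the resulting lower bound by $t$ gives $\liminf_t U_i(X(t))/t\geq\lambda_0$ for every $i$, with total failure probability at most a constant times $\eps$. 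Rescaling $\eps$ completes the proof.
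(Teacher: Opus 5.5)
Your route is the one the paper intends: it asserts Theorem \ref{thm6} only as following ``similarly'' from Theorems \ref{thm2} and \ref{thm5}. Step 2, the genuinely new ingredient, is handled correctly. BDG at exponent exactly $p_0$, followed by $\bigl(\int_{kT}^{(k+1)T}(1+W)^{2/p_0}ds\bigr)^{p_0/2}\le T^{(p_0-1)/2}\bigl(\int_{kT}^{(k+1)T}(1+W)^{2}ds\bigr)^{1/2}$ and \eqref{eq-EW2-2}, lands on $\E_x W(X(kT))$, which \eqref{eq-boundEW} bounds uniformly in $k$. You are also right that the $\limsup$ statements in the proof of Theorem \ref{thm2} must be replaced by bounds valid for all $t$.

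The step that fails as written is the final absorption. In the bookkeeping of Theorem \ref{thm1}, a block of length $T_2=n_0T_0$ only guarantees $\wdt G_k\ge 0.7\lambda T_0$. So the drift available per unit time is $0.7\lambda/n_0$; this is why the slopes in Lemma \ref{lm1-thm1} carry the factor $1/n_0$ and why $\lambda_0$ is of order $\lambda/n_0$. A loss $\int_0^t|\wdt H_{2,i}(X(s))|\,ds\le K+0.1\lambda t$ is at least the remaining net drift $0.5\lambda/n_0$ whenever $n_0\ge 5$, and $n_0=\lceil\wdt h+2\rceil$ is typically large. The lower bound in \eqref{e1.7} then no longer grows in $\wdt m$, and retuning $0.7$, $0.5$ or $\lambda_0$ cannot help. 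Moreover, in your ordering the slope is fixed together with $C_1$ in Step 1, before $\wdt h$ and $n_0$ are chosen in Step 3.

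The fix is to reverse this dependence. Since part (6) of Assumption \ref{asp5} is retained, $H_i$ is bounded below, so $\wdt H_{1,i}\ge\min\{1,\inf H_i\}$ for every choice of $C_1$. Also $\mu(\wdt H_{1,i}\wedge\wdt h)\ge\mu(H_i\wedge\wdt h)-\mu|\wdt H_{2,i}|$. Hence $\wdt h$ and $n_0$ can be fixed from the $H_i$ alone. You then take $\inf_{C_1^c}W$ large enough that the slope is at most $0.1\lambda/n_0$, and only afterwards choose $C_2$, $T_0$, $\wdt u$, $R$.

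Alternatively, sharpen the block estimate. Because the truncated integrand is $\ge 1\ge 0.7\lambda$ off the compact set, the first two cases give $\wdt G_k\ge 0.7\lambda T_2$. The third case does too once $n_0\ge (1+\wdt h)/(1-0.7\lambda)$, which makes the drift of order $\lambda$ and lets your absorption go through.

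In fact, under (6) the split is unnecessary. $H_i\wedge\wdt h$ is already bounded and positive off $C_U$, so the argument of Theorem \ref{thm5} applies verbatim. The replaced hypothesis enters only through the event $A_M^R$ of Lemma \ref{lm1-thm1}, which your Step 2 supplies, so Step 1 can be dropped entirely.
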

\section{Applications}

\subsection{Extinction of stochastic Kolmogorov systems with regime-switching}
We revisit a stochastic Kolmogorov system. A comprehensive study of this system is provided in \cite{hening2018coexistence} for stochastic differential equations (SDEs), and later presented in \cite{nguyen2025hybrid} in the setting of  SDEs with regime switching. The system is the couple \eqref{kol} and \eqref{c14-eq:tran} below describing the dynamics of $n$ interacting populations $(X_1(t),\dots,X_n(t))$.
\begin{equation}\label{kol}
	dX_i(t)=X_i(t) f_i(\BZ(t))dt+X_i(t)g_i(\BZ(t))dE_i(t), ~i=1,\dots,n,
\end{equation}
where
$\BZ(\cdot)= (\BX(\cdot),  \alpha(\cdot))$, $\BX(t)= (X_1(t),\ldots, X_n(t))$, and
$ \alpha(t)$ is an irreducible Markov chain on a finite state space $\mathcal S=\{1,\cdots,m_0\}$ with generator $Q=(q_{ij})_{m_0\times m_0}$,
that is,
\begin{equation}\label{c14-eq:tran}\begin{array}{ll}
		&\disp \PP\{ \alpha(t+\Delta)=j| \alpha(t)=i,
		 \alpha(s), s\leq t\}=q_{ij}\Delta+o(\Delta) \text{ if } i\ne j \
		\hbox{ and }\\
		&\disp \PP\{ \alpha(t+\Delta)=i| \alpha(t)=i,
		 \alpha(s), s\leq t\}=1+q_{ii}\Delta+o(\Delta).\end{array}
\end{equation}
We use the following notation for simplicity
$\R^n_+=[0,\infty)^n, \R^{n,\circ}_+=(0,\infty)^n, \partial\R^n_+=\R^n_+\setminus \R^{n,\circ}_+$, ${\mathbb S}=\R^n_+\times\mathcal S, {\mathbb S}^\circ=\R^{n,\circ}_+\times\mathcal S,\partial{\mathbb S}=\partial\R^n_+\times\mathcal S$, $\bx=(x_1,\dots,x_n)$,
$\bz=(\bx,\alpha )$.
Denote
$\BE(t)=(E_1(t),\dots, E_n(t))'=\Gamma'\BB(t)$
$\Gamma$ is an $n\times n$ matrix such that
$\Gamma'\Gamma=\Sigma=(\sigma_{ij})_{n\times n}$
and $\BB(t)=(B_1(t),\dots, B_n(t))$ is a vector of independent standard Brownian motions adapted to the filtration $\{\F_t\}_{t\geq 0}$.  

We  write $\op$,  the generator of the process $(\BZ(t)):=((X_1(t),\dots,X_n(t), \alpha(t))$ as
\bea
[\op F](\bz ) \ad = \sum_i x_if_i(\bz )\frac{\partial F}{\partial x_i}(\bz ) + \frac{1}{2}\sum_{i,j}\sigma_{ij}x_ix_jg_i(\bz )g_j(\bz )\frac{\partial^2 F}{\partial x_i \partial x_j}(\bz )\\
\aad  \ \hfill +\sum_{j\in\mathcal S} q_{mj}F(\bx,j).
\eea
and use the norm $\|\bx\|=\sum_{i=1}^n |x_i|$ in $\R^n$.
The following assumption will guarantee the existence and uniqueness of a strong solution to \eqref{kol} and \eqref{c14-eq:tran} for a given initial value and the solution process is a Markov-Feller process. See \cite[Subsection 12.1.1]{nguyen2025hybrid}.
\begin{asp}\label{A12.1}
	Assume the following hold.
	\begin{enumerate}
		\item $f_i(\cdot, \alpha), g_i(\cdot, \alpha):\R^n_+\to\R$ are locally Lipschitz functions for any $ \alpha\in\mathcal S.$
		\item
		There are $\delta_0\in(0,1), \ell >0, H>0$ and a  function $W :  \R^n_+\mapsto[1,\infty)$ such that
		\begin{equation}\label{c14-bound-W}
			\text{$W (\bx)$ is twice differentiable in $\bx$ and }
			\liminf_{\bx\to\infty} \frac{W (\bx)}{\ln \|\bx\|}>n.
		\end{equation}
		\begin{equation}\label{c14-a.tight}
			[\op W ](\bz)+2\delta_0 [\Gamma W](\bz )+\sum_{i=1}^n (|f_i(\bz )|+\sigma_{ii}^2g_i^2(\bz)\leq K\1_{\{\|\bx\|\leq \ell\}}-2
		\end{equation}
		where
		\begin{equation}\label{c14-hat-Lambda}
			[\Gamma W](\bz ):=	\sum_{i,j} |W _{x_i}(\bx ) x_iW _{x_j}(\bx ) x_jg_i(\bz )g_j(\bz )\sigma_{ij}\geq 0 \text{ forall } \bz\in\R^n_+\times\mathcal S.
		\end{equation}
	\end{enumerate}
\end{asp}
Let $I\subset \{1,\cdots,n\}$ and $I^c=\{1,\cdots,n\}\setminus I$.
Let
$$\R_+^{I}:=\{(x_1,\dots,x_n)\in\R^n_+: x_i=0\text{ if } i\in I^c\text{ and }x_i\geq 0\text{ if  }x_i\in I \}$$
$$\R_+^{I,\circ}:=\{(x_1,\dots,x_n)\in\R^n_+: x_i=0\text{ if } i\in I^c\text{ and }x_i>0\text{ if  }x_i\in I \}$$ and $\partial\R_+^{I }:=\R_+^I\setminus\R_+^{I ,\circ}$
We will apply Theorem \ref{thm5} with $\M=\R^n_+\times \mathcal S$, $\M_+=\R^{n,\circ}_+\times \mathcal S$ and $\M_0=\R_+^{I}\times \mathcal S$.

\begin{asp}\label{A12.3}
There exists  a $\rho>0$ such that
	\begin{equation}\label{c14-ae3.1}
		\int_{\R_+^{(n-k)}\times\mathcal S}\left(f_i\bz )-\dfrac{\sigma_{ii}g_i\bz )^2}{2}\right)\mu(d\bz) <-\rho \text{ for any } i\in I^c, \mu\in \PE(\R^{I,\circ}_+\times\mathcal S).
	\end{equation}
	Suppose further that
	for any $\nu\in\PE\left(\partial\R_+^{I }\times\mathcal S\right)$, we have
	\begin{equation}\label{c14-ae3.2}
		\max_{i\in I}\int_{\R_+^{I}\times\mathcal S}\left(f_i\bz )-\dfrac{\sigma_{ii}g_i\bz )^2}{2}\right)\nu(d\bz )>0
	\end{equation}
\end{asp}
Lemma 12.20 in \cite{nguyen2025hybrid} shows that $X_j(t),\; j \in I^c,$ converges to $0$ exponentially fast with high probability when $\sum_{j \in I^c} X_j(0)$ is sufficiently small. The proof relies on Assumptions \ref{A12.1}, \ref{A12.3}, and an additional condition (see \cite[{\bf (H12.4)}]{nguyen2025hybrid}), which is technicall more restrictive than \eqref{c14-a.tight}.

We show here that this additional condition is in fact not necessary.

\begin{thm}\label{thm3.1}
	Under Assumptions \ref{A12.1} and \ref{A12.3},
	for any $\bz=(\bx,s)\in\M_0:=\R^{I ,\circ}_+\times\mathcal S$ and $\eps>0$, there exists $\delta=\delta(\bz,\eps)>0$ such that
	$$
	\PP_{(\wdt\bx,\wdt s)}\left\{\lim_{t\to\infty}\frac{\ln X_i(t)}t=\int_{\R_+^{I}\times\mathcal S}\left(f_i\bz )-\dfrac{\sigma_{ii}g_i\bz )^2}{2}\right)\nu(d\bz )<0, i\in I^c_\mu\right\}>1-\eps \text{ if } |\wdt\bx-\bx|<\delta, \wdt s\in\mathcal S.
	$$
\end{thm}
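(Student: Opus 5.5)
We apply Theorem \ref{thm5} with $\M=\R^n_+\times\mathcal S$, $\M_+=\R^{n,\circ}_+\times\mathcal S$ and extinction set $\R^I_+\times\mathcal S$ (closed and invariant), using the Lyapunov functions
$$
U_i(\bz):=-\ln x_i - c\,W(\bx),\qquad i\in I^c,
$$
with $c>0$ a small constant fixed below. Itô's formula on $\M_+$ gives
$$
\op U_i=-\Big(f_i-\tfrac{\sigma_{ii}g_i^2}{2}\Big)-c\,\op W=:H_i,
$$
which extends continuously to $\M$. Likewise
$$
\Gamma U_i\le 2\sigma_{ii}g_i^2+2c^2\,\Gamma W .
$$
Hence conditions (1) and (4) of Assumption \ref{asp5} hold with equality in (4).

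\textbf{Remaining conditions of Assumption \ref{asp5}.}

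Condition (6): by \eqref{c14-a.tight}, $-\op W\ge 2+\sum_i|f_i|$ outside $\{\|\bx\|\le\ell\}$. So for fixed small $c$, $H_i\ge -|f_i|-\frac{\sigma_{ii}g_i^2}{2}+c(2+\sum_j|f_j|+\dots)$. This is not yet positive, and is the reason for the following modification. Since \eqref{c14-a.tight} also controls $\sum_i\sigma_{ii}g_i^2$, we replace $c\,\op W$ by using $W$ with coefficient $1$: set $U_i=-\ln x_i-W$. Then outside the ball
$$
H_i\ge 2+\sum_j|f_j|-|f_i|-\tfrac{\sigma_{ii}g_i^2}{2}\ge 1 .
$$

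Condition (5): $\mu H_i=-\mu\big(f_i-\frac{\sigma_{ii}g_i^2}{2}\big)-\mu\op W$. Because \eqref{c14-a.tight} bounds $\op W$ above, $\mu(\op W)\le 0$ for every invariant $\mu$; this is the standard fact (via the $\delta_0\Gamma W$ term, exponential moments give integrability). So \eqref{c14-ae3.1} yields $\mu H_i\ge\rho$ for ergodic $\mu$ on $\R^{I,\circ}_+\times\mathcal S$. To pass to all of $\PI(\R^I_+\times\mathcal S)$, note that ergodic measures supported on $\partial\R^I_+\times\mathcal S$ are excluded by \eqref{c14-ae3.2}: a standard argument (as in \cite{hening2018coexistence}) shows that no ergodic measure on $\R^I_+$ charges the boundary in the relevant sense, or one restricts $\M_0$ locally. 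Uniform integrability of $H_i$ with respect to $\PI$ follows from the $\sum_i(|f_i|+\sigma_{ii}g_i^2)$ term in \eqref{c14-a.tight}, which gives a uniform bound on $\mu(|f|+g^2)$ for invariant $\mu$; this is an improvement over a mere first-moment bound.

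Condition (2): $U=\min_i U_i\to\infty$ forces $x_i\to0$ for all $i\in I^c$ only on bounded sets, since $W$ grows like $\ln\|\bx\|$. This is handled by working on the compact neighborhood $C$ in Theorem \ref{thm5}.

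Condition (3): the $2\delta_0\Gamma W$ term in \eqref{c14-a.tight}, together with $\sigma_{ii}g_i^2$, gives $\frac1t\int_0^tP_s(\Gamma U_i)\,ds\le (W(\bx)+Kt)/(t\delta_0)$, which is bounded on compacts.

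\textbf{Conclusion.} Theorem \ref{thm5} then yields, for $\wdt\bx$ near $\bx$ (so that $U(\wdt\bx)$ is large while $\wdt\bx$ stays in a fixed compact set), that with probability at least $1-\eps$ we have $\liminf_t U_i/t\ge\lambda_0$. Thus $X_i\to0$ exponentially fast for $i\in I^c$. To identify the exact limit of $\frac{\ln X_i(t)}{t}$, we use that the occupation measures of $\BZ(t)$ converge to the ergodic measure $\nu$ on $\R^{I,\circ}_+\times\mathcal S$, as in \cite[Lemma 12.20]{nguyen2025hybrid}; this step uses \eqref{c14-ae3.2} to rule out limits on $\partial\R^I_+$, together with the strong law for the martingale $\int g_i\,dE_i$.

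\textbf{Main obstacle.} The main obstacle is condition (5) for invariant measures that charge $\partial\R^I_+\times\mathcal S$, together with the final identification of the limit. I would handle both through the persistence-on-$\R^I_+$ result implied by \eqref{c14-ae3.2}.
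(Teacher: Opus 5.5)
There is a genuine gap, and it lies in condition (5) of Assumption \ref{asp5}. Your choice $U_i=-\ln x_i-W$ cannot satisfy it. The extinction set has to be closed and invariant, so it is $\R^I_+\times\mathcal S$. Then $\PI(\R^I_+\times\mathcal S)$ always contains measures carried by $\partial\R^I_+\times\mathcal S$, for example $\delta_{\mathbf 0}\otimes\pi$ with $\pi$ the stationary law of $\alpha$ (the origin is invariant for any Kolmogorov system). At $\bx=\mathbf 0$ we have $\op W=0$, because there is no drift or diffusion there, $W$ does not depend on the regime, and $Q$ has zero row sums. Hence for this measure $\mu H_i=-\mu\big(f_i-\tfrac{\sigma_{ii}g_i^2}{2}\big)$, which is minus the invasion rate of species $i\in I^c$ at the origin.

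Assumption \ref{A12.3} puts no sign on that quantity. In a three-species competitive system with $I=\{1,2\}$, species $3$ can grow at low density in an empty habitat and still fail to invade the $(1,2)$-coexistence measure; then $\mu H_3<0$. Your claim that \eqref{c14-ae3.2} ``excludes'' ergodic measures on $\partial\R^I_+\times\mathcal S$ is wrong: it says each such measure is invadable by some species of $I$, not that none exist. Appealing to persistence of the face does not repair this. The proof of Theorem \ref{thm1}, and hence of Theorem \ref{thm5}, needs $\frac1T\int_0^TP_s\wdt H(x)\,ds\ge 0.8\lambda$ uniformly for $x\in C_U\cap\M_0$, and that set includes points of $\partial\R^I_+$. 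Shrinking $\M$ to $\{x_i>0,\ i\in I\}$, so that $\M_0$ becomes the open face, breaks condition (6) instead. Compact subsets of that smaller space stay away from $\partial\R^I_+$, while $H_i$ can be negative near the origin.

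The missing idea is to build persistence of the face into the Lyapunov function itself. By \cite[Lemma 4]{schreiber2011persistence}, \eqref{c14-ae3.2} gives weights $\hat p_i\in(0,1)$, $i\in I$, with $\sum_{i\in I}\hat p_i\,\nu\big(f_i-\tfrac{\sigma_{ii}g_i^2}{2}\big)>0$ for all $\nu\in\PI(\partial\R^I_+\times\mathcal S)$. Choose $\check p>0$ small enough that this stays positive after subtracting $\check p\max_{j\in I^c}\nu\big(f_j-\tfrac{\sigma_{jj}g_j^2}{2}\big)$. Then take $U_j=-W+\sum_{i\in I}\hat p_i\ln x_i-\check p\ln x_j$ for $j\in I^c$.

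\begin{itemize}
\item On ergodic measures of the open face, $\mu\big(f_i-\tfrac{\sigma_{ii}g_i^2}{2}\big)=0$ for $i\in I$ and $\mu(\op W)=0$, so $\mu H_j\ge\check p\rho$ by \eqref{c14-ae3.1}.
\item On boundary measures, the $\hat p$-terms supply the positivity.
\item Conditions (2), (3) and (6) survive the extra logarithms, and you handled them essentially correctly.
\item Because $\bx$ lies in the open face, $\sum_{i\in I}\hat p_i\ln\wdt x_i$ stays bounded near $\bx$, so $U(\wdt\bx)\to\infty$ as $\wdt\bx\to\bx$. This is why the statement is restricted to $\bz\in\R^{I,\circ}_+\times\mathcal S$.
\item The same $\hat p$-weighted function is what rules out occupation-measure limits on $\partial\R^I_+$ when you identify the exact exponent, as in \cite[Lemma 12.20]{nguyen2025hybrid}.
\end{itemize}

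A smaller point: an upper bound on $\op W$ does not by itself give $\mu(\op W)\le 0$. What you need is the identity $\mu(\op W)=0$, which rests on the integrability supplied by the $2\delta_0\Gamma W$ term (cf.\ \cite[Lemma 12.4]{nguyen2025hybrid}).
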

\begin{proof}
	
In view of \cite[Lemma 4]{schreiber2011persistence}, the condition \eqref{c14-ae3.2} is equivalent to the existence of
$0<\hat p_i<1, i>k$
such that for any $\nu\in \PI\left(\partial\R_+^{I }\times\mathcal S\right),$  we have
$$\sum_{i=k+1}^n\hat p_i\int_{\R_+^{I}\times\mathcal S}\left(f_i\bz )-\dfrac{\sigma_{ii}g_i\bz )^2}{2}\right)\nu(d\bz )>0$$
Thus, there is $\check p\in (0,1)$ sufficiently small such that
\begin{equation}\label{c14-e3.2}
	\begin{aligned}
		\sum_{i=k+1}^n&\hat p_i\int_{\R_+^{I}\times\mathcal S}\left(f_i\bz )-\dfrac{\sigma_{ii}g_i\bz )^2}{2}\right)\nu(d\bz )\\
		&-\check p\max_{i\leq k} \int_{\R_+^{I}\times\mathcal S}\left(f_i\bz )-\dfrac{\sigma_{ii}g_i\bz )^2}{2}\right)\nu(d\bz )>0 \text{ for any }\nu\in \PI\left(\partial\R_+^{I }\times\mathcal S\right)
	\end{aligned}
\end{equation}
We define:
	$$U_j(\bx )=-W(\bx)+\sum_{i\in I } \hat p_i\ln( x_i)-\check{p}\ln  x_j, j\in I^c_\mu \text{ and } U(\bx)= \min_{j\in I^c_\mu} U_j(\bx).$$
	We have $$\lim_{u\to 0} \sup_{\{\bx: U(\bx)\geq u\}} \dist(\bx, \M_0)=0.$$
We also see that  $\op U_j(\bz)=H_j(\bz), \bz\in\M_+, j\in I^c$, where \begin{equation}\label{c14-e3.4}H_j(\bz)=-\op W(\bx)+\sum_{i\in I }\hat p_i\left(f_i(\bz )-\dfrac{\sigma_{ii}g_i(\bz )^2}{2}\right) -\check p \left(f_j(\bz )-\dfrac{\sigma_{jj}g_j\bz )^2}{2}\right), j\in I^c\end{equation}
is a continuous function on $\M$.
	Due to \eqref{c14-bound-W}, we can verify that $\op U\geq 1$ if $\|\bx\|\geq\ell$. 
In view of \cite[Lemma 12.4]{nguyen2025hybrid}, $\mu\in \PI(\R_+^{n}\times\mathcal S),$
$\mu \op W=0$.
As a result, applying  \eqref{c14-e3.2} to \eqref{c14-e3.4}
yields
\begin{equation}\label{c14-e3.5}
	\mu H_j \geq \lambda \text{ for  any }\mu\in \PI(\M_0), j\in I^c_\mu.
\end{equation}
We can compute that
\begin{align*}
[\Gamma U_i](\bz)=& -\sum_{i,j} \left((W _{x_i}(\bx ) x_iW _{x_j}(\bx ) x_jg_i(\bz )g_j(\bz )\sigma_{ij}\right)\\
& +\sum_{i\in I }\hat p_i\sigma_{ii}g_i^2(\bz )-\check p \sigma_{jj}^2g_j(\bz )
\end{align*}
We further obtain from \eqref{c14-a.tight} that
$$
[\op W]\leq K-\delta \Gamma U_i 
$$
which easily imply that
$$
\int_0^t P_{\bz}[\Gamma U_i](\BZ(t))dt\leq W(\bx) +Kt, t\geq 0.
$$
Since Assumption \ref{asp5} has been verified, we can apply Theorem \ref{thm5}. It follows that, under Assumptions \ref{A12.1} and \ref{A12.3}, for any $\eps>0$ and $\bar m>0$, there exists $u_{\eps,\bar m}>0$ such that
\[
\PP_\bz\left\{\liminf_{t\to\infty} \frac{U(X(t))}{t}\geq \lambda_0>0\right\}\geq 1-\eps,
\quad \forall \bz\in \M_+:\; U(\bx)\geq u_{\eps,\bar m},\ \|\bx\|\leq \bar m.
\]

Note that $\liminf_{t\to\infty} \frac{U(X(t))}{t}<0$ implies $\lim_{t\to\infty} X_j(t)=0$ for all $j\in I^c$. 

Then, by analyzing the associated random occupation measures and using arguments similar to those in the proof of \cite[Lemma 12.20]{nguyen2025hybrid}, we can show that for almost all $\omega$ in
\[
\left\{\omega:\; \liminf_{t\to\infty} \frac{U(X(t))}{t}\leq \lambda_0\right\},
\]
we have
\[
\lim_{t\to\infty}\frac{\ln X_i(t)}{t}
= \int_{\R_+^{I}\times\mathcal S}
\left(f_i(\bz)-\frac{(\sigma_{ii} g_i(\bz))^2}{2}\right)\nu(d\bz),
\]
which completes the proof.
\end{proof}

\begin{rem}
Theorem \ref{thm3.1} shows a  convergence in probability to an "attracting" subspace $R^{I,\circ}_+$ when the initial value is close. By a further condition for the accessibility of the boundary and a technical condition to exclude critical cases (\cite[{\bf (H12.5)}]{nguyen2025hybrid}), we can show that the solution to \eqref{kol} with positive inital will converge to the boundary $\partial\R^n_+$ with probability 1. We refer to \cite[Theorem 12.15]{nguyen2025hybrid} for details.
\end{rem}

\subsection{Disease-free state stability in an SIRS model with regime switching}
In this subsection, we present a couple of examples where the quadratic variation of a feasible Lyapunov function $U$ is not linearly bounded.
First, consider a SIRS model 
\begin{equation}\label{e1.1}
	\begin{cases}
		dS(t)=\left(b(\alpha(t))- I(t)F(S(t),I(t),\alpha(t))-c_1(\alpha(t)) S(t)+\gamma(\alpha(t)) R(t)\right)dt+\sigma_1(\alpha(t)) S(t)dE_1(t)  \\
		dI(t)=\left(I(t)F(S(t),I(t),\alpha(t))-c_2(\alpha(t)) I(t)\right)dt+\sigma_2(\alpha(t)) I(t)dE_2(t)  \\
		dR(t)=(c_4(\alpha(t)) I(t) - c_3(\alpha(t)) R(t))dt+\sigma_3(\alpha(t)) R(t)dE_3(t).
	\end{cases}
\end{equation}
where $b, c_i, i=1,\cdots, 4$ are positive functions $\mathcal S\to\R$ and
$\sigma_j,j=1,2,3$ are functions $\mathcal S\to\R$, $E_i, i=1,2,3$ are three independent Brownian motions.
We assume that $F$ is a locally Lipschitz function and linearly bounded. 
To see that it is not practically feasible to apply the main result in \cite{foldes2024stochastic} or our Theorem \ref{thm5} to obtain a sharp condition for extinction of the disease $I(t)$, we examine possible choices of Lyapunov functions satisfying Assumption \ref{asp-FS}.

Since we aim to establish an exponential rate of convergence of $I(t)$ to $0$, it is natural to consider a function $V$ involving $-\ln I(t)$. However, the generator $\op[-\ln I(t)]$ contains a term involving $S(t)$, which necessitates introducing an auxiliary function $W$ such that $\op W(X(t))$ includes a negative term that dominates the growth of $S(t)$.

Based on the structure of \eqref{e1.1}, a natural candidate is $W(s,i,r,\alpha)=s+i+r$. However, for this choice, the associated quadratic variation is only linearly bounded when $\sigma_1(\alpha)$ is sufficiently small. A similar difficulty arises when attempting to apply Theorem \ref{thm5}. However, we will show that Theorem \ref{thm6} is applicable here.

To proceed, we examine when $I(t)=R(t)=0$ and $S(t)$ follows the equation
	\begin{equation}\label{e1.S}
		dS(t)=\left(b(\alpha(t))-c_1(\alpha(t)) S(t)\right)dt+\sigma_1(\alpha(t)) S(t)dE_1(t).
\end{equation}

It is well known (see e.g. \cite[Theorem 2.2]{nguyen2020general}) that $(S(t),\alpha(t))$ satisfying \eqref{e1.S} and \eqref{c14-eq:tran} has a unique invariant measure on $[0,\infty)\times\mathcal S$ that we denote by $\pi$.
\begin{thm}\label{thm2e}
Suppose 
	\begin{equation}\label{e3.22}
		\lambda_I= -\int_{[0,\infty)\times\mathcal S} \left(f(s, 0,\alpha)-c_2(\alpha)-\frac{\sigma_2^2}2\right)\pi(ds,d\alpha)>0.
	\end{equation}
Then, $$\PP_{s, i, r, \alpha}\left\{\lim_{t\to\infty} \frac{\ln I(t)}t=-\lambda_I<0, \lim_{t\to\infty}\frac{\ln R(t)}t=-\lambda_R\wedge\lambda_I<0\right\}=1, \text{ for all } (s,i, r,\alpha)\in(0,\infty)^3\times\mathcal S.$$
where
	$$\lambda_R=\sum_{\alpha\in\mathcal S}\left(c_3(\alpha)+\frac{\sigma_3^2(\alpha)}2\right)\pi(\{\alpha\})$$
\end{thm}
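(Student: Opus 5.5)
Write $\M=[0,\infty)^3\times\mathcal S$, $\M_+=(0,\infty)^3\times\mathcal S$ and $\M_0=[0,\infty)\times\{0\}\times\{0\}\times\mathcal S$ (the disease-free set), and note that $\pi$ is the unique element of $\PI(\M_0)$. The argument has three parts.

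\textbf{Step 1 (local extinction via Theorem \ref{thm6}).} Fix small $p\in(0,1)$ and set
\[
U_1=-\ln i+p\ln\Big(\frac{1}{r+i}\Big)-cW,\qquad U_2=-\ln i-cW ,
\]
with $W=(1+s+i+r)^{\theta}$ for a small $\theta$; this $W$ need not be linear. A cleaner choice is to take only $U=-\ln i - cW$ and handle $R$ afterwards. Then $\op(-\ln i)=-F(s,i,\alpha)+c_2+\sigma_2^2/2$, so we may take $H=\op U$, which extends continuously to $\M$. On $\M_0$ we get $\pi H=\lambda_I>0$, using that $\pi\op W=0$ for invariant $\pi$ and that $F$ is linearly bounded. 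By choosing $\theta$ and $c$ suitably, $\op W\le K-\gamma W$ with $W\gtrsim 1+s$ dominates the linear growth of $F$ in $s$, so $H\ge 1$ off a compact set. The carr\'e du champ is $\Gamma W\le K W^2$ because $\Gamma (s+i+r)\le K(s+i+r)^2$, and $\op W^2\le k_W W^2$ follows by direct computation. Hence $(\Gamma U)^{p_0/2}+|H|^{p_0}\le K(1+s)^{p_0}\le K(1+W)$ provided $p_0\theta^{-1}\le 1$ is arranged, i.e. we take $W=(1+s+i+r)^{\theta}$ with $\theta\ge p_0$; we can pick $p_0\in(1,\theta]$ once $\theta>1$ is allowed. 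We have to check that $\theta$ slightly above $1$ still gives $\op W\le K-\gamma W$. This requires $\theta(\theta-1)\sigma_1^2/2<c_1$, which holds for $\theta$ close to $1$. Theorem \ref{thm6} then yields: for every $\eps$ and compact $C$, if $i$ is small enough, then with probability at least $1-\eps$ we have $\liminf \frac{U(X_t)}{t}\ge\lambda_0$, and hence $I(t)\to0$ exponentially.

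\textbf{Step 2 (from local to global, probability one).} This is the main obstacle. Let $\tau$ be the entrance time into $\{U\ge u_{\eps,C}\}\cap C$. We show that $\tau<\infty$ a.s. from every interior point, using positive recurrence of $(S+I+R)$ via $\op W\le K-\gamma W$ together with the strong Markov property. To do so, we argue by contradiction with the random occupation measures: on the event that $I(t)$ does not approach $0$, the tightness coming from $W$ means that any weak limit of the occupation measures is invariant. Moreover, $\frac{\ln I(t)}{t}=\frac1t\int_0^t(F-c_2-\sigma_2^2/2)\,ds+o(1)$. If the limit measure charged the interior, this would contradict $\ln I\le \ln(s+i+r)=O(\ln t)$ combined with the hitting argument. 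The standard approach is that of \cite[Lemma 12.20]{nguyen2025hybrid}: the set where $I$ stays bounded away from $0$ infinitely often has probability at most $\eps$ by repeated trials (Borel--Cantelli through the strong Markov property at returns to a compact set), and letting $\eps\to0$ gives probability one.

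\textbf{Step 3 (rates).} Once $I\to0$, the occupation measures of $(S,\alpha)$ converge to $\pi$, and the ergodic theorem together with the uniform integrability from $W$ gives $\frac{\ln I(t)}{t}\to-\lambda_I$. For $R$, variation of constants applied to $dR=(c_4I-c_3R)dt+\sigma_3R\,dE_3$ gives
\[
R(t)=\Phi(t)\Big(R(0)+\int_0^t\Phi(s)^{-1}c_4I(s)\,ds\Big),
\]
where $\frac{\ln\Phi(t)}{t}\to-\lambda_R$ by the ergodicity of $\alpha$ (the marginal of $\pi$). Combining this with the exponential rate of $I$ yields $\frac{\ln R(t)}{t}\to-(\lambda_R\wedge\lambda_I)$.
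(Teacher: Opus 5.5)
Your route differs from the paper's. The paper passes to $X=I+R$, $Y=I/X$, keeps $Y$ moving on the boundary $\{x=0\}$, and classifies its ergodic measures ($y\in(0,1)$ versus $y=0$), so that the single function $\ln(1+s+x)-\ln x$ gives exponential decay of $I+R$. You work with the boundary $\{i=0\}$ and read off the $R$-rate from $R(t)=\Phi(t)\big(R(0)+\int_0^t\Phi(u)^{-1}c_4I(u)\,du\big)$. That is simpler, and the last step is correct.

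However, Step 1 fails as written. You use the same $W$ inside $U=-\ln i-cW$ and as the control function in Theorem \ref{thm6}. Condition (3) is then false for every $p_0>1$. The reason is that $H=\op U$ contains $-c\op W$, which is of exact order $W$ at infinity; this is precisely what makes $H\ge 1$ off a compact set. Hence $|H|^{p_0}/(1+W)$ grows like $W^{p_0-1}$, and likewise for $(\Gamma U)^{p_0/2}$, since $\Gamma U$ is of order $c^2\Gamma W\asymp W^2$. Your bound $K(1+s)^{p_0}$ only accounts for the $F$-term. The two roles must be separated. For example, take $U=-\ln i-c(s+i+r)$: for large $c$, $H\ge1$ off a compact set and $|H|+(\Gamma U)^{1/2}\le K(1+s+i+r)$. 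Pair it with $W=(1+s+i+r)^{\theta}$, $1<p_0\le\theta$, $\theta$ close to $1$. The paper likewise pairs a logarithmic $U$ with a separate power $W$. Also, $\M_0$ must be $\{i=0\}$, not $\{i=r=0\}$. The condition $U\ge u$ only forces $i$ to be small, so Assumption \ref{asp5}(2) fails for $\{i=r=0\}$. On $\{i=0\}$ one has $R\to0$, hence $\PI(\M_0)=\{\pi\otimes\delta_{r=0}\}$, and your computation $\mu H=\lambda_I$ goes through.

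The more serious gap is Step 2. Positive recurrence of $S+I+R$ plus the strong Markov property only gives infinitely many returns to $\{s+i+r\le K\}$. To get $\tau<\infty$ a.s.\ you also need a lower bound, uniform over that return set, on the probability of entering $\{U\ge u_{\eps,C}\}\cap C$ within a fixed time. That accessibility comes from the nondegeneracy of the hybrid diffusion (support theorem), which is exactly what the paper invokes. It cannot be dispensed with. For the linearly bounded $F=\beta s\,i/(1+i)$ one has $\lambda_I>0$ automatically, yet without noise and with $\beta$ large there can be an attracting endemic equilibrium. So nothing built only from $W$ and the local result can yield probability one. Your occupation-measure ``contradiction'' does not supply this, since an invariant measure charging the interior is perfectly compatible with $\ln I\le\ln(s+i+r)=O(\ln t)$. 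Once accessibility is established, no repeated trials are needed: $E=\{I(t)\to0\}$ is shift invariant, so $\PP_z(E^c)=\E_z\big[\PP_{Z(\tau)}(E^c)\big]\le\eps$ for every $\eps>0$, where $Z=(S,I,R,\alpha)$. In Step 3, the uniform integrability needed for $\frac1t\int_0^tF\,du\to\int F(s,0,\alpha)\,d\pi$ can be obtained from the bound on $\int_0^tW^q\,ds$ in the proof of Theorem \ref{thm2}, since $\theta q=2\theta/(1+p_0)>1$.
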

\begin{proof}
Using the transformation: $X(t)=I(t)+R(t), Y(t)=I(t)/X(t)$, \eqref{e1.1} with a positive initial value will become
	\begin{equation}\label{e3.2}
	\begin{cases}
		dS(t)=&\left(b(\alpha(t))- Y(t)X(t)F(S(t),Y(t)X(t),\alpha(t))-c_1(\alpha(t)) S(t)+\gamma(\alpha(t)) (1-Y(t))X(t)\right)dt\\&+\sigma_1(\alpha(t)) S(t)dE_1(t)  \\
		dX(t)=&X(t)\left(Y(t)[F(S(t),Y(t)X(t),\alpha(t))-c_2(\alpha(t)) +(c_4(\alpha(t))]-c_3(\alpha(t)) (1-Y(t))\right)dt\\&+\sigma_2(\alpha(t)) Y(t)X(t)dE_2(t)  
+\sigma_3(\alpha(t)) R(t)dE_3(t).\\
dY(t)=&Y(t)f_Y(S(t), I(t), Y(t))dt +Y(t)(1-Y(t)) (\sigma_2 dE_2 - \sigma_3 dE_3)
	\end{cases}
\end{equation}
The solution process $\BZ(t)=(S(t), X(t), Y(t),\alpha(t))$ stays in  $\M_+:=[0,\infty)\times(0,\infty)\times[0,1]\times\mathcal S$ for any initial value $\bz=(s, x, y, \alpha)\in\M_+$.
The process can be continuously extended as a Markov Feller process on $\M:=[0,\infty)\times(0,\infty)\times[0,1]\times\mathcal S.$
Consider function $W(\bz):=(s+x)^{1+2p_0}$.
We can easily check that for $p_0>0$ be sufficiently small, $W$ satisfies conditions (1), (2) and (4) of Theorem \ref{thm6}.
where
When $X(t)=0$, the process lives in $\M_0=[0,\infty)\times\{0\}\times[0,1]\times\mathcal S$ satisfying
\begin{equation}\label{e3.3}
	\begin{cases}
		dS(t)=&\left(b(\alpha(t))- c_1(\alpha(t)) S(t)\right)dt+\sigma_1(\alpha(t)) S(t)dE_1(t)  \\
		dY(t)=&Y(t)f_Y(S(t), 0)dt +Y(t)(1-Y(t)) (\sigma_2 dE_2 - \sigma_3 dE_3)
	\end{cases}
\end{equation}
where
$$f_Y(\bz)=[(1-y)(F(s,xy,\alpha)-c_2(\alpha)+c_3)-c_4y+\sigma_3^2(\alpha) (1-y)^2 -\sigma_2^2(\alpha)y(1-y)]$$
Because the equation of $S(t)$ in \eqref{e3.3} is independent of $Y(t)$, any ergodic measure of \eqref{e3.3} has the form
\begin{equation}\label{e3.21}
\mu(ds,dy,dk)=\pi(ds,dk) \rho (dy|ds,dy)
\end{equation}
where $\pi(ds,dk)$ is the unique invariant probability measure of $(S(t),\alpha(t))$ which satisfies \eqref{e1.S}.

Note that $yf_y(s, 0, y, \alpha)=-c_4(\alpha)<0$ when $y=1$.
Thus $y=1$ is the entrance point of $Y(t)$ on $[0,1]$.
On the other hand, $y=0$ is a natural boundary.
As such, for any ergodic measure $\mu$, we have $\mu(\{0<y<1\})=1$ or $\mu(\{y=0\})=1$.

If $Y(t)>0$, we have
$$
\ln Y(t)= f_Y(S(t), 0, Y(t),\alpha(t))dt +\frac{(1-Y(t))^2(\sigma_2^2+\sigma_3^2)}2 dt + (1-Y(t))(\sigma_2 dE_2 - \sigma_3 dE_3)
$$
Thus, if $\mu(\{0<y<1\})=1$, we must have
\begin{equation}\label{e3.23}
\mu(\wdt f_Y(s, 0, y,\alpha)=0
\end{equation}

$$\wdt H(s,0, y,\alpha)= \wdt f_Y(s,0,y,\alpha)-\left(F(s, 0, \alpha) -c_2(\alpha) +\frac{\sigma_2^2}2\right)$$

In view of \eqref{e3.21}, \eqref{e3.22} and \eqref{e3.23},
we have
\begin{equation}\label{e3.24}
	\mu(\wdt  H(s,0, y,\alpha))=\lambda>0 \text{ if } \mu\in\PI(\M_0), \mu(\{0<y<1\})=1
\end{equation}
If $\mu(\{y=0\})=1$, we can easily have $H(s,0,0,\alpha)=c_3(\alpha)+\frac{\sigma_3^2}2$
\begin{equation}\label{e3.25}
	\mu(\wdt H(s,0, y,\alpha))=\sum_{\alpha\in\mathcal S}\left(c_3(\alpha)+\frac{\sigma_3^2(\alpha)}2\right)\nu_\alpha=:\wdt\lambda>0 \text{ if } \mu\in\PI(\M_0), \mu(\{y=0\})=1
\end{equation}
By  \cite[Remark 13]{benaim2018stochastic}
\begin{equation}\label{e3.26}
	\mu(\op U_1)=0
\end{equation}
For $U(\bz)=\ln (1+s+x)-\ln x=U_1(s, x)-\ln x $, we can see that 
$$[\op U](\bz)=\op U_1 -\wdt H(\bz)=:H(\bz) \text{ on } \M_+$$
where $H(\bz)$ is a continuous function on $\M$. We also see that
$$
H(\bz)\leq c_1 - c_2 (s+x) \text{ for some } c_1,c_2>0, \bz\in\M.
$$
As a result, condition (3) of Assumption \ref{asp4} holds for the proposed triple $(W, U, H)$.

Combining \eqref{e3.23}, \eqref{e3.24}, \eqref{e3.25} and \eqref{e3.26}, we have
$$
\mu H\geq \min\{\lambda, \wdt\lambda\}>0 \text{ for any } \mu\in\PI(\M_0)
$$
which means Assumption \ref{asp2} 
Since we have verified that $(W, U, H)$ satisfies Assumptions \ref{asp2} and \eqref{asp4}, we derive from Theorem \ref{thm2} that for any $\eps>0, K>0$, there exists a $\delta>0$ satisfying
\begin{equation}\label{e3.27}
\PP_{\bz}\left\{\liminf_{t\to\infty}\frac{\ln [(1+S(t)+X(t))/X(t)]}t \leq -\lambda_0<0\right\} \text{ for all } \bz\in\M: s+x\leq K, x\leq \delta
\end{equation}
where $\lambda_0$ is a nonrandom constant independent of $\eps,K$.
Because the hybrid diffusion \eqref{e1.1} is nondegenerate, \eqref{e3.27} implies that the hybrid diffusion is transient (see e.g. \cite[Section 3.3]{nguyen2025hybrid}).
Then using arguments about the weak limits of  random occupation measures as those in the proofs of \cite[Proposition 4.1 \& Theorem 2.2]{nguyen2020long}, we can easily obtain 
\end{proof}
\subsection{Extinction of one predator in a two-predator–one-prey model}

Here is another example where the linear boundedness of the quadratic variation of a Lyapunov function is difficult to verify. We will apply Theorem \ref{thm6} instead.
We consider a stochastic Lotka--Volterra model describing a food web with two predators and one prey:
\begin{equation}\label{e3.31}
	\begin{cases}
		dX_1(t)=X_1(t)\big(r_1-a_{11}X_1(t)-a_{12}X_2(t)-a_{13}X_3(t)\big)dt+\sigma_1X_1(t)\,dE_1(t),\\
		dX_2(t)=X_2(t)\big(-r_2+a_{21}X_1(t)-a_{23}X_3(t)\big)dt+\sigma_2X_2(t)\,dE_2(t),\\
		dX_3(t)=X_3(t)\big(-r_3+a_{31}X_1(t)-a_{32}X_2(t)\big)dt+\sigma_3X_3(t)\,dE_3(t).
	\end{cases}
\end{equation}

where $X_1(t)$ denotes the prey population size, while $X_2(t)$ and $X_3(t)$ denote the two predator populations. The constants $r_i>0$ represent intrinsic growth or death rates, and $a_{ij}>0$ describe interaction coefficients between species. The processes $E_i(t)$, $i=1,2,3$, are independent standard Brownian motions modeling environmental noise, and $\sigma_i$ are the noise intensities.
We denote $\BX(t)=(X_1(t),X_2(t), X_3(t))\in \M:=[0,\infty)$ be the solution process with initial value $\bx=(x_1,x_2,x_3)$. Define $\M_+=(0,\infty)^3$ and $\M_0=[0,\infty)^2\times\{0\}$.
In view of \cite{hening2018coexistence}, we know that
when $X_2(t)=X_3(t)=0$, $\{\BX(t)\}$ has an unique invariant probability measure on $(0,\infty)\times\{0\}^2$, denoted by $\mu_1$ if $r_1-\frac{\sigma_1^2}2>0$.
\begin{equation}\label{a1-sub12-c1}
	\frac12\sigma_1^2<r_1 
\end{equation}
Moreover, under that condition, 
 the system on $(0,\infty)^2\times\{0\}$ (i.e. $X_3(t)=0$)
has a unique invariant probability measure $\mu_{12}$ if

\begin{equation}\label{a1-sub12-c2}
 \left(r_1-\frac12\sigma_1^2\right)\frac{a_{21}}{a_{11}}-r_2-\frac{\sigma_2^2}2>0.
\end{equation}
Furthermore, 
$$
\E_{\mu_{12}}(X_1):=\int_{(0,\infty)^2} x_1\mu_{12}(dx_1dx_2)= \frac{r_2+\frac{\sigma_2^2}2}{a_{21}}
$$ and $$ \E_{\mu_{12}}(X_2):= \int_{(0,\infty)^2} x_2\mu_{12}(dx_1dx_2)= \frac1{a_{12}}\left[r_1-\frac12\sigma_1^2-\frac{r_2+a_{11}\frac{\sigma_2^2}2}{a_{21}}\right]
$$

We will show that, under \eqref{a1-sub12-c1} and \eqref{a1-sub12-c2}, a condition for extinction of $X_3$ in \eqref{e3.31} is
$$
\lambda_3(\mu_{12}):=-r_3-\frac{\sigma_3^2}2 +a_{31}\E_{\mu_{12}} X_1 -a_{32}\E_{\mu_{12}} X_2<0.
$$

We have 
\begin{equation}
\begin{cases}
	\op[\ln x_1]=&r_1-a_{11}y x_1-a_{12}y x_2-a_{13}y x_3 -\frac{\sigma_1^2}2=: H_1(\bx),\\
\op[\ln x_2]=&	-r_2+a_{21}y x_1-a_{23}y x_3-\frac{\sigma_2^2}2=: H_2(\bx)\\
\op[\ln x_3]=&	-r_3+a_{31}y x_1-a_{32}y x_2-\frac{\sigma_3^2}2=: H_3(\bx)
\end{cases}	
\end{equation}

On $\M_0=[0,\infty)^2\times\{0\}$, there are 3 ergodic measures $\bdelta$, which is the Dirac measure at the original, $\mu_1$, the ergodic measure on the $x_1$ axis and $\mu_{12}$.
We have $\delta_1(H_1)>0$, $\mu_1(H_2)>0$ and $\mu_{12}(H_3)<0$.
On the other hand, we have $\mu_1(H_1)=\mu_{12}(H_1)=\mu_{12}(H_2)=0$ (see \cite[Lemma 5.1]{hening2018coexistence} or \cite[Remark 13]{benaim2018stochastic}).
Thus, let $p_1\in(0,1)$ and $p_2\in(0,1)$ sufficiently small relative to $p_1$ and $p_3\in(0,1)$ sufficiently small relative to $p_2$, we have
$\bdelta(p_1H_1+p_2H_2-p_3H_3)>0$ as well as
\begin{eqcite}
	\bdelta(p_1H_1+p_2H_2-p_3H_3)=&p_1\bdelta(H_1)+p_2\bdelta(H_2)+p_3\bdelta(H_3)>0\\
\mu_{12}(p_1H_1+p_2H_2-p_3H_3)=&p_2\mu_1(H_2)-p_3\mu_{12}(H_3)>0\\
\mu_{12}(p_1H_1+p_2H_2-p_3H_3)=&-p_3\mu_{12}(H_3)>0
\end{eqcite}
which implies the existence of $\Lambda>0$ satisfying
\begin{equation}\label{e3.36}
\mu(p_1H_1+p_2H_2-p_3H_3)\geq \Lambda \text{ for all } \mu\in \PI(\M_0) 
\end{equation}
Let $b_0=\frac12\left(\frac{a_{12}}{a_{21}}\wedge\frac{a_{12}}{a_{21}}\right)$ and $r_0=b_0(r_2\wedge r_3)$
we can easily check that
$$
\op W_0\leq K_0 -r_0W_0 \text{ and } \Gamma W_0 \leq k_0 W_0\text{ for all } \bx\in[0,\infty)^3,
$$ 
where $W_0(\bx)=1+x_1+b_0x_2+b_0x_3$ and $K_0, k_0$ are some positive constants.
Moreover, with $p_0>0$ be sufficiently small and some positive constant $K_1,k_1$, we have
\begin{equation}\label{e3.37}
	\op W^{1+2p_0}_0\leq K_1 - \frac{r_0}2 W^{1+2p_0}_0  \text{ and } \Gamma W^{1+2p_0}_0 \leq k_1 W^{1+2p_0}_0\text{ for all } \bx\in[0,\infty)^3
\end{equation}
We can also find a $K^\diamond>0$ and a $\gamma^\diamond>0$ such that 
$$
\op \ln W_0 \leq K^\diamond - \gamma^\diamond (x_1+x_2+x_3) \text{ for all } \bx\in(0,\infty)^3.
$$

Thus, for $U=-C_0\ln W_0 +p_1\ln x_1 + p_2\ln x_2 -p_3\ln x_3$,
if $C$ is sufficiently large, we can find $K_U>0, \gamma_U>0$ satisfying
\begin{equation}\label{e3.38}
H:= -C_0[\op\ln W_0] + p_1H_1+p_2H_2-p_3H_3\geq \gamma_U \|\bx\| - K_U \text{ for all }\bx\in\M.
\end{equation}

\eqref{e3.37} and \eqref{e3.38} verify that Assumption \ref{asp4} is satisfied for $W=W^{1+2p_0}_0$, $U=-C_0\ln W_0 +p_1\ln x_1+p_2\ln x_2-p_3\ln x_3$ and $H$ defined above.

Since $\mu(\op\ln W_0)=0$ for any $\mu\in\PI(\M_0)$ due to \cite[Remark 13]{benaim2018stochastic} again, we have from \eqref{e3.36} that
$\mu(U)\geq\Lambda$ for all $\mu\in\PI(\M_0)$.
Thus, Assumption \ref{asp2} is satisfied.

As a result, the conclusion of Theorem \ref{thm2} holds for $U$, which shows $X_3(t)$ tends to $0$ with a large probability when the initial condition is sufficiently close to $\M_0$. Then using the weak limit of the random occupation  measures, we can show exactly the rate of convergence:
	$$
\PP_{\bx}\left\{\lim_{t\to\infty}\frac{\ln X_3(t)}t=\lambda_3(\mu_{12})<0\right\}>1-\eps \text{ if } \frac{(1+x_1+b_0x_2+b_0x_3)^{C_0}x_3^{p_3}}{x_1^{p_1}x_2^{p_2}}<\delta.
$$
for sufficiently small $\delta=\delta(\eps)$.

%
%

\bibliographystyle{amsalpha}
\bibliography{Kolmogorov}
\appendix
\section{Proofs}
In this apprendix, 	let $\{X_n\}$ be a sequence of random variables  and $\F_n$ adapted. Suppose
$$M_p:=\sup_{n}\E |X_n|^p <\infty \text{ for some }  p>1.$$
We write $X_n=Y_n+Z_n$ where $Y_n=X_n\1_{\{|X_n|\leq n\}}$, $Z_n=X_n\1_{\{|X_n|> n\}}$
\begin{lm}\label{lm-a3}
	For $p>1$, we have
	$\lim_{k\to\infty} \sum_{n=1}^\infty \frac1{(k+n)^p}=0.$
\end{lm}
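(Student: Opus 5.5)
Since every term is nonnegative, I will prove the statement by comparing the sum with an integral. For fixed $k\ge 0$ and $p>1$, the function $s\mapsto (k+s)^{-p}$ is decreasing on $[0,\infty)$. Hence for each $n\ge 1$,
\begin{equation*}
\frac{1}{(k+n)^p}\le \int_{n-1}^{n}\frac{ds}{(k+s)^p}.
\end{equation*}
Summing over $n\ge 1$ then gives
\begin{equation*}
0\le \sum_{n=1}^\infty \frac{1}{(k+n)^p}\le \int_0^\infty \frac{ds}{(k+s)^p}=\frac{k^{1-p}}{p-1}\quad (k>0).
\end{equation*}
The integral converges precisely because $p>1$, which is the only place the hypothesis enters. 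As $k\to\infty$ the right-hand side tends to $0$ because $1-p<0$. The squeeze then yields the claim.

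An alternative route is to write $\sum_{n\ge1}(k+n)^{-p}=\sum_{m>k}m^{-p}$ for integer $k$, which is the tail of the convergent $p$-series and therefore tends to $0$. For real $k$ one can reduce to $\lfloor k\rfloor$ by monotonicity in $k$. I prefer the integral comparison, since it handles real $k$ directly and gives an explicit rate.

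There is no genuine obstacle here. The only point to check is that the integral comparison is applied in the correct direction: bounding each term by the integral over the interval to its left, which is where the decreasing integrand is largest. This is the form in which the lemma is used for \eqref{e3.12}, with $h_q$ in place of $k$, scaled by $0.05\lambda$ and with exponent $1/q>1$. That rescaling only changes the constants, since $\sum_i (h+ci)^{-1/q}=c^{-1/q}\sum_i (h/c+i)^{-1/q}$.
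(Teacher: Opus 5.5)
Your proof is correct and follows the paper's argument: bound each term by $\int_{n-1}^{n}(k+x)^{-p}\,dx$, sum over $n$, and let $k\to\infty$ in $\int_0^\infty (k+x)^{-p}\,dx$. Your value $k^{1-p}/(p-1)$ for that integral is the right one; the paper writes the bound as $(p-1)/k^{p-1}$, which is a slip but harmless, since both expressions tend to $0$.
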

\begin{proof}
	Since $$ \frac1{(k+n)^p}\leq \int_{n-1}^n\frac1{(k+x)^p}dx\, \text{ for } k>0, n\geq 1,$$ we have
	$$\sum_{n=1}^\infty \frac1{(k+n)^p}\leq \int_0^\infty \frac1{(k+x)^p}dx\leq \frac{p-1}{k^{p-1}}\to 0 \text{ as } k\to\infty.$$
\end{proof}
\begin{lm}\label{lm-a1} Let $\{a_i\}$ be a sequence satisfying
	$\left|\sum_{i=n_0}^n a_i\right|\leq \eps$ for any $n\geq n_0$ then
	$$
	\left|\sum_{i=n_0}^n ia_i \right|\leq 2\eps n,\,\forall\, n\geq n_0.
	$$
\end{lm}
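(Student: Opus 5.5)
The plan is to use Abel summation (discrete integration by parts), which turns the weighted sum $\sum i a_i$ into a combination of the partial sums $\sum a_i$; the hypothesis bounds each of these partial sums by $\eps$.

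Set $S_k:=\sum_{i=n_0}^k a_i$ for $k\geq n_0$ and $S_{n_0-1}:=0$, so that $a_i=S_i-S_{i-1}$ for $i\geq n_0$ and, by assumption, $|S_k|\leq\eps$ for all $k\geq n_0-1$. First I will substitute and reindex the subtracted sum:
\begin{equation*}
\sum_{i=n_0}^n i a_i=\sum_{i=n_0}^n iS_i-\sum_{i=n_0-1}^{n-1}(i+1)S_i .
\end{equation*}
Next I will collect terms. The terms with index $n_0\le i\le n-1$ combine with coefficient $i-(i+1)=-1$, the top term contributes $nS_n$, and the bottom term is $-n_0S_{n_0-1}=0$. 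This gives
\begin{equation*}
\sum_{i=n_0}^n i a_i=nS_n-\sum_{i=n_0}^{n-1}S_i .
\end{equation*}

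Finally I will apply the triangle inequality with $|S_k|\le\eps$:
\begin{equation*}
\Big|\sum_{i=n_0}^n i a_i\Big|\leq n\eps+(n-n_0)\eps\leq 2\eps n,
\end{equation*}
which holds for every $n\geq n_0$. (For $n=n_0$ the middle sum is empty and the bound is simply $n_0\eps$.)

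There is no genuine obstacle here. The only point needing care is the boundary bookkeeping in the summation by parts, namely setting $S_{n_0-1}=0$ so that the lower boundary term vanishes and does not add an extra $n_0\eps$.
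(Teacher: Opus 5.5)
Your proof is correct and follows the paper's argument: summation by parts with the convention $S_{n_0-1}=0$, which yields $nS_n-\sum_{i=n_0}^{n-1}S_i$, exactly the paper's $(n+1)A_n-\sum_{i=n_0}^{n}A_i$. Your triangle-inequality step, giving $(2n-n_0)\eps\le 2\eps n$, is cleaner than the paper's: its written intermediate bound $2(n+1-n_0)\eps$ is a slip (it already fails at $n=n_0\ge 3$ with $a_{n_0}=\eps$), although the stated conclusion $2\eps n$ is unaffected.
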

\begin{proof}
	Let $A_i=\sum_{k=n_0}^{i} a_k, i\geq n_0$ and $A_{n_0-1}=0$.
	Summing by parts, we have
	$$\left|\sum_{i=n_0}^n ia_i\right| =\left|\sum_{i=n_0}^n i (A_i-A_{i-1})\right|=\left|(n+1) A_{n} - \sum_{i=n_0}^{n} A_{i}\right|\leq 2(n+1-n_0)\eps$$
\end{proof}
\begin{lm}\label{lm-a2}
	For any $n_0\in\N$ satisfying $M_p\sum_{n=n_0}^\infty n^{-p}\leq \eps\delta^2$, we have
	$$\PP\left(\left|\sum_{i=n_0}^k \left(Y_i-\E(Y_i\big|\F_{i-1})\right)\right|\leq 2k\delta \text{ for all } k\geq n_0\right)\geq1-\eps.$$
\end{lm}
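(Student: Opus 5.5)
The plan is to pass to a weighted martingale, control it with Doob's maximal inequality, and then remove the weights by the summation-by-parts estimate of Lemma \ref{lm-a1}.

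\textbf{Setup.} Set $D_i:=Y_i-\E(Y_i\,|\,\F_{i-1})$. These are martingale differences with respect to $(\F_i)$. Since $|Y_i|\le i$, each $D_i$ is bounded and hence square integrable. Consider the weighted martingale
$$N_k:=\sum_{i=n_0}^k \frac{D_i}{i},\qquad k\ge n_0 .$$

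\textbf{Step 1: second moments.} By orthogonality of martingale increments and $\E D_i^2\le \E Y_i^2$,
$$\E N_k^2=\sum_{i=n_0}^k \frac{\E D_i^2}{i^2}\le \sum_{i=n_0}^k \frac{\E Y_i^2}{i^2}.$$
For $1<p\le 2$ the truncation gives the pointwise bound
$$Y_i^2=|X_i|^2\1_{\{|X_i|\le i\}}\le |X_i|^p\, i^{2-p}.$$
Hence $\E Y_i^2/i^2\le M_p\, i^{-p}$, and so
$$\sup_k \E N_k^2\le M_p\sum_{i\ge n_0} i^{-p}\le \eps\delta^2 .$$

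\textbf{Step 2: maximal inequality.} Doob's $L^2$ maximal inequality (Kolmogorov form) applied to $N_k$ and letting $k\to\infty$ gives
$$\PP\Big(\sup_{k\ge n_0}|N_k|>\delta\Big)\le \delta^{-2}\sup_k\E N_k^2\le \eps .$$

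\textbf{Step 3: removing the weights.} On the complementary event, put $a_i:=D_i/i$. Every partial sum $\sum_{i=n_0}^k a_i$ is bounded in absolute value by $\delta$. Lemma \ref{lm-a1} then yields
$$\Big|\sum_{i=n_0}^k i a_i\Big|=\Big|\sum_{i=n_0}^k D_i\Big|\le 2\delta k\quad\text{for all } k\ge n_0 .$$
This event therefore has probability at least $1-\eps$, which is the claim.

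\textbf{Main obstacle.} The only delicate point is the bound $\E Y_i^2\le M_p\, i^{2-p}$ in Step 1, which uses $p\le 2$. For $p>2$ the pointwise inequality fails where $|X_i|<1$. In that case I would first use Lyapunov's inequality to replace $p$ by $2$, which is harmless in the applications, where $p_0<2$ anyway. Alternatively, one can bound $Y_i^2\le 1+|X_i|^p$, which is adequate whenever the hypothesis is read with $M_p$ replaced by $1+M_p$.
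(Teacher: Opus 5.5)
Your proposal is correct and follows the paper's proof step for step. The truncation bound $\E Y_i^2\le i^{2-p}\E|X_i|^p$ controls $\sum_{i\ge n_0} i^{-2}\E D_i^2$, Doob's $L^2$ maximal inequality is then applied to the weighted martingale $\sum_{i=n_0}^k D_i/i$, and Lemma~\ref{lm-a1} removes the weights.

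Your $p\le 2$ caveat is well founded: the paper uses the same truncation bound without comment, and for $p>2$ the statement as written actually fails. Take i.i.d.\ $X_i=\pm a$ with $a$ small and $\delta$ chosen with equality in the hypothesis; then $|D_1|=a>2\delta$, so the event has probability zero. Hence $1<p\le 2$, the range used in the applications, is the right reading. Your second fix, $Y_i^2\le 1+|X_i|^p$, would additionally require replacing $n^{-p}$ by $n^{-2}$ in the hypothesis.
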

\begin{proof}
	\begin{equation}\label{a2-e1}
	\begin{aligned}
		\sum_{n=n_0}^\infty n^{-2}\E\left[Y_n-\E(Y_n\big|\F_{n-1})\right]^2\leq & \sum_{n=n_0}^\infty n^{-2}\E Y_n^2\\
		\leq & \sum_{n=n_0}^\infty n^{-2} n^{2-p}\E |X_n|^p\\
		\leq & M_p\sum_{n=n_0}^\infty n^{-p}\leq \eps\delta^2
	\end{aligned}
	\end{equation}
	By Doob's inequality
	\begin{align*}
		\PP\left(\max_{n_0\leq k\leq n}\left|\sum_{i=n_0}^k i^{-1}\left(Y_n-\E(Y_n\big|\F_{n-1})\right)\right|\geq \delta\right)
		\leq& \frac1{\delta^2} \E \left(\sum_{i=n_0}^n i^{-1}\left(Y_i-\E(Y_i\big|\F_{i-1})\right)\right)^2\\= &
		\frac1{\delta^2}	\sum_{i=n_0}^n i^{-2}\E\left[Y_i-\E(Y_i\big|\F_{i-1})\right]^2\leq\eps
	\end{align*}
	which is true for any $n\geq n_0$ because of \eqref{a2-e1} and the fact that
	$$\E \left[\left(Y_i-\E(Y_i\big|\F_{i-1})\right)\left(Y_j-\E(Y_j\big|\F_{j-1})\right)\right]=0 \text{ if } i\ne j.$$
	As a result,
	$$\PP\left(\left|\sum_{i=n_0}^k i^{-1}\left(Y_i-\E(Y_i\big|\F_{i-1})\right)\right|\geq \delta \text{ for all } k\geq n_0\right)\leq\eps.$$
	In view of Lemma \ref{lm-a1},
	$$\PP\left(\left|\sum_{i=n_0}^k \left(Y_i-\E(Y_i\big|\F_{i-1})\right)\right|\leq 2k\delta \text{ for all } k\geq n_0\right)\geq1-\eps.$$
	
\end{proof}
\begin{lm}\label{lm-a4}
	For any $n_0$ satisfying $2M_p\sum_{n=n_0}^\infty n^{-p}\leq \eps\delta$, we have \begin{equation}\label{a4-e1}
\PP\left(\sum_{i=n_0}^n \left|Z_i-\E(Z_i\big|\F_{i-1})\right|\leq 2n\delta \text{ for all } k\geq n_0\right)\geq1-\eps\end{equation}
\end{lm}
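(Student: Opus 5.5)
The plan is to reduce everything to a single first-moment estimate on a weighted series, and then get the bound for all $n\ge n_0$ at once, with no maximal inequality. Because the summands $\left|Z_i-\E(Z_i\big|\F_{i-1})\right|$ are nonnegative, the partial sums are monotone. So a bound on a weighted tail series controls every partial sum simultaneously, and we never need Doob's inequality (unlike in Lemma \ref{lm-a2}).

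\textbf{Step 1: moment bound.} First we estimate $\E|Z_i|$ with the truncation built into $Z_i=X_i\1_{\{|X_i|>i\}}$. On $\{|X_i|>i\}$ we have $|X_i|\le |X_i|^p i^{1-p}$, hence
$$\E|Z_i|\le i^{1-p}\E|X_i|^p\le M_p\, i^{1-p}.$$
By the conditional Jensen inequality, $\E\left|\E(Z_i\big|\F_{i-1})\right|\le \E|Z_i|$. Combining the two,
$$\E\left|Z_i-\E(Z_i\big|\F_{i-1})\right|\le 2M_p\, i^{1-p}.$$

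\textbf{Step 2: Markov on the weighted series.} Define
$$S:=\sum_{i=n_0}^\infty i^{-1}\left|Z_i-\E(Z_i\big|\F_{i-1})\right|.$$
By monotone convergence and Step 1,
$$\E S\le 2M_p\sum_{i=n_0}^\infty i^{-p}\le \eps\delta,$$
where the last inequality is exactly the hypothesis on $n_0$. Markov's inequality then gives $\PP(S>\delta)\le \eps$.

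\textbf{Step 3: from the weighted series to the partial sums.} Fix $\omega$ with $S\le\delta$ and any $n\ge n_0$. For $i\le n$ we have $i\le n$, so
$$\sum_{i=n_0}^n\left|Z_i-\E(Z_i\big|\F_{i-1})\right|=\sum_{i=n_0}^n i\cdot i^{-1}\left|Z_i-\E(Z_i\big|\F_{i-1})\right|\le nS\le n\delta\le 2n\delta.$$
This holds for every $n\ge n_0$ simultaneously. Hence the event in \eqref{a4-e1} contains $\{S\le\delta\}$, which has probability at least $1-\eps$.

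\textbf{Main obstacle.} The only delicate point is the exponent bookkeeping in Step 1. We need the $i^{-1}$ weight combined with the truncation level $i$ to produce a summable $i^{-p}$; the cruder bound $\sum \E|Z_i|\lesssim\sum i^{1-p}$ diverges when $p<2$. Weighting by $i^{-1}$ and summing by parts against the trivial bound $i\le n$ resolves this. There is also no integrability issue: $\E|X_i|\le 1+M_p$, so all conditional expectations are well defined.
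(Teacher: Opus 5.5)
Your proof is correct and follows the paper's argument: the same truncation bound $\E\left|Z_i-\E(Z_i\mid\F_{i-1})\right|\le 2M_p\, i^{1-p}$, followed by Markov's inequality applied to the $i^{-1}$-weighted series. The only difference is the last step. The paper invokes the summation-by-parts Lemma \ref{lm-a1}, whereas you use nonnegativity and $i\le n$ directly; because you apply Markov to the full series $S$, the uniformity in $n$ is explicit, and you get the slightly sharper bound $n\delta$.
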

\begin{proof}
	\begin{align*}	\E\left|Z_n-\E\left(Z_n\big|F_{n-1}\right)\right|\leq& \E |Z_n|+\E\left|\E\left(Z_n\big|F_{n-1}\right)\right|\leq 2\E|Z_n|\\
		\leq & 2\E |X_n\1_{\{|X_n|>n\}}|\\
		\leq & 2 n^{1-p}\E |X_n|^p\leq 2M_pn^{1-p}
	\end{align*}
	By Markov's inequality
	\begin{align*}
		\PP\left(\sum_{i=n_0}^n i^{-1}\left|Z_i-\E(Z_i\big|\F_{i-1})\right|\geq \delta\right)
		\leq& \frac1{\delta} \E \sum_{i=n_0}^n i^{-1}\left|Z_n-\E(Z_n\big|\F_{n-1})\right|\\\leq &
		\frac1{\delta}	M_p\sum_{i=n_0}^n n^{-p}\leq\eps
	\end{align*}
	Thus, we apply Lemma \ref{lm-a1} to obtain
	$$\PP\left(\sum_{i=n_0}^n \left|Z_i-\E(Z_i\big|\F_{i-1})\right|\leq 2n\delta \text{ for all } k\geq n_0\right)\geq1-\eps.$$
\end{proof}
Now, we can prove Proposition \ref{pron1}.
\begin{proof}[Proof of Proposition \ref{pron1}]
	Note that
	$$X_n-\E\left(X_n\big|F_{n-1}\right)=Y_n-\E\left(Y_n\big|F_{n-1}\right)+Z_n-\E\left(Z_n\big|F_{n-1}\right).$$
	Let $n_0$ satisfy $M_p\sum_{n=n_0}^\infty n^{-p}\leq \min\{\eps\delta^2,0.5\eps\delta, \eps\}$, we have from Lemma \ref{lm-a2} and \ref{lm-a4} that
	\begin{equation}\label{pron1-ep1}\PP\left(\left|\sum_{i=n_0}^k \left(X_i-\E(X_i\big|\F_{i-1})\right)\right|\leq 4k\delta \text{ for all } k\geq n_0\right)\geq1-2\eps.\end{equation}
	By Markov inequality, for $m=2n_0M_p^{1/p}\eps$, we have
	\begin{equation}\label{pron1-ep2}
	\begin{aligned}
	\PP\left(\left|\sum_{i=1}^{n_0-1} \left(X_i-\E(X_i\big|\F_{i-1})\right)\right|\leq m \right)\leq& \frac1m\sum_{i=1}^{n_0-1}\E \left|X_i-\E(X_i\big|\F_{i-1})\right|\\
	\leq& \frac2m\sum_{i=1}^{n_0-1}\E|X_i|\leq \frac{2n_0 M_p^{1/p}}m=\eps
	\end{aligned}
	\end{equation}
	Combining \eqref{pron1-ep1} and \eqref{pron1-ep2}, we have
	\begin{equation}\label{pron1-ep3}
	\begin{aligned}
	\PP\left(\left|\sum_{i=1}^{n} \left(X_i-\E(X_i\big|\F_{i-1})\right)\right|\leq m +4n\delta  \,\text{ for all } n\in\N\right)\geq 1-3\eps
	\end{aligned}
	\end{equation}
	which completes the proof.
\end{proof}

\begin{proof}[Proof of Lemma \ref{lm1-thm1}]
	For $A_{M,\eps}^R$,	it is implicitly proved in the book under (5) of Assumption \ref{asp2}. Alternatively, one can use Proposition \ref{pron1}.
	
	$\wdt \Delta_n$ is a bounded martingale difference w.r.t $\F_{nT_2}$, so the proof for $B_{M,\eps}^R$ is obvious.
\end{proof}

\end{document}